\DeclareMathOperator{\im}{im} 
\DeclareMathOperator{\diag}{diag}
\DeclareMathOperator{\e}{e} 
\DeclareMathOperator{\id}{Id}
\newcommand{\R}{{\mathbb R}}
\newcommand{\N}{{\mathbb N}}
\newcommand{\dd}[2]{\frac{\text{d} #1}{\text{d} #2}}
\newcommand{\trans}{\mathsf{T}}
\newcommand{\ones}{1}
\DeclareMathOperator{\relint}{relint}
\newcommand{\dep}{d}
\newcommand{\rka}{l}
\newcommand{\IL}{I}
\newcommand{\JL}{J}
\newcommand{\LL}{L}
\newcommand{\mBp}{{\mathcal B}}
\newcommand{\mG}{G}
\newcommand{\mH}{H}
\newcommand{\ce}[1]{{\sf{#1}}}
\newtheorem{thm}{Theorem}
\newtheorem{pro}[thm]{Proposition}
\newtheorem{lem}[thm]{Lemma}
\newtheorem{cor}[thm]{Corollary}
\newtheorem{fac}[thm]{Fact}
\theoremstyle{definition}
\newtheorem{rem}[thm]{Remark}
\newtheorem{exa}{Example}
\newcommand{\gray}[1]{{\color{gray}#1}}
\newcommand\blfootnote[1]{%
  \begin{NoHyper}
  \renewcommand\thefootnote{}\footnote{#1}%
  \addtocounter{footnote}{-1}%
  \end{NoHyper}
}
\begin{document}

\title{
Parametrized systems of \\ generalized polynomial inequalities \\ 
via linear algebra and convex geometry
}

\author{%
Stefan M\"uller$$,
Georg Regensburger
}

\date{\today}

\maketitle

\begin{abstract}
We provide fundamental results on positive solutions to 
parametrized systems of generalized polynomial {\em inequalities} (with real exponents and positive parameters),
including generalized polynomial {\em equations}.
In doing so, we also offer a new perspective on fewnomials and (generalized) mass-action systems.

We find that geometric objects, rather than matrices, determine generalized polynomial systems:
a bounded set/``polytope'' $P$ (arising from the coefficient matrix)
and two subspaces representing monomial differences and dependencies (arising from the exponent matrix).
The dimension of the latter subspace, the monomial dependency $d$, is crucial.

As our main result, we rewrite {\em polynomial inequalities}
in terms of $d$ {\em binomial equations} on $P$, involving $d$ monomials in the parameters.
In particular, we establish an explicit bijection between the original solution set 
and the solution set on $P$ via exponentiation.

(i) Our results apply to any generalized polynomial system.
(ii) The dependency $d$ and the dimension of $P$ indicate the complexity of a system.
(iii) Our results are based on methods from linear algebra and con\-vex/poly\-hedral geometry,
and the solution set on $P$ can be further studied using methods from analysis
such as sign-characteristic functions (introduced in this work).

We illustrate our results (in particular, the relevant geometric objects) 
through three examples from real fewnomial and reaction network theory.
For two mass-action systems,
we parametrize the set of equilibria and the region for multistationarity, respectively,
and even for univariate trinomials, we offer new insights:
We provide a ``solution formula'' involving discriminants and ``roots''.

%

\vspace{2ex}
{\bf Keywords.} 
generalized polynomial systems,
fewnomials, reaction networks, 
existence, uniqueness, upper bounds,
parametrizations, multistationarity,
discriminants, roots
\end{abstract}

\vspace{-2ex}
\blfootnote{
\scriptsize

\noindent
{\bf Stefan M\"uller} \\
Faculty of Mathematics, University of Vienna, Oskar-Morgenstern-Platz 1, 1090 Wien, Austria \\[1ex]
{\bf Georg Regensburger} \\
Institut f\"ur Mathematik, Universit\"at Kassel, Heinrich-Plett-Strasse 40, 34132 Kassel, Germany \\[1ex]
Corresponding author: 
\href{mailto:st.@univie.ac.at}{st.mueller@univie.ac.at}
}


\section{Introduction}

In this conceptual paper,
we provide fundamental results on positive solutions to 
pa\-ram\-e\-trized systems of generalized polynomial {\em inequalities} (with real exponents and positive parameters),
including generalized polynomial {\em equations}.
In doing so, we also offer a new perspective on fewnomials and (generalized) mass-action systems.

Let 
$A' \in \R^{\rka' \times m}$ and
$A'' \in \R^{\rka'' \times m}$
be coefficient matrices,
$B \in \R^{n \times m}$ 
be an exponent matrix, and
$c \in \R^m_>$ 
be a positive parameter vector.
They define the
parametrized system of (strict or non-strict) generalized polynomial inequalities
\begin{align*}
\sum_{j=1}^m a'_{ij} \, c_j \, x_1^{b_{1j}} \cdots x_n^{b_{nj}} > 0 , \quad i=1,\ldots,\rka' , \\
\sum_{j=1}^m a''_{ij} \, c_j \, x_1^{b_{1j}} \cdots x_n^{b_{nj}} \ge 0 , \quad i=1,\ldots,\rka'' 
\end{align*}
in $n$ positive variables $x_i>0$, $i=1,\ldots,n$,
and involving $m$ monomials $x_1^{b_{1j}} \cdots x_n^{b_{nj}}$, $j=1,\ldots,m$.
(We allow $\rka' = 0$, that is, only non-strict inequalities, 
and analogously $\rka'' = 0$, that is, only strict inequalities.)
In compact form, 
\begin{equation} \label{eq:problemAB}
A' \left( c \circ x^B \right) > 0 , \quad 
A'' \left( c \circ x^B \right) \ge 0
\end{equation}
for $x \in \R^n_>$.

We obtain~\eqref{eq:problemAB} as follows.
From the exponent matrix $B = (b^1,\ldots,b^m)$,
we define the monomials $x^{b^j} = x_1^{b_{1j}} \cdots x_n^{b_{nj}} \in \R_>$,
the vector of monomials $x^B \in \R^m_>$ via $(x^B)_j = x^{b^j}$,
and the vector of monomial terms $c \circ x^B \in \R^m_>$ using the componentwise product~$\circ$.
(All notation is formally introduced at the end of this introduction.)

Clearly, the vector of monomial terms $(c \circ x^B) \in \R^m_>$ is positive.
If~\eqref{eq:problemAB} holds, 
it lies in the polyhedral cones
$C' = \{ y \in \R^m_\ge \mid A' \, y \ge 0 \}$ and $C'' = \{ y \in \R^m_\ge \mid A'' \, y \ge 0 \}$,
more specifically, in the positive parts of $\relint C'$ and $C''$.
Indeed, the crucial object is the convex cone 
\begin{equation*}
C = \{ y \in \R^m_> \mid A' \, y > 0, \, A'' \, y \ge 0 \},
\end{equation*}
a polyhedral cone with some faces removed.
It allows us to write system~\eqref{eq:problemAB} as
\begin{equation} \label{eq:problemBC}
\left(c \circ x^B\right) \in C .
\end{equation}
In this work, we start from an arbitrary cone $C \subseteq \R^m_>$ in the positive orthant,
call it the {\em coefficient} cone,
and refer to \eqref{eq:problemBC} as a {\em parametrized system of generalized polynomial inequalities}
(for given $B$ and $C$).

On the one hand, 
system~\eqref{eq:problemAB}
encompasses parametrized systems of generalized polynomial {\em equations},
\begin{equation*}
A \left( c \circ x^B \right) = 0
\end{equation*}
with $A \in \R^{\rka \times m}$,
which allows for applications in two areas:
(i)~fewnomial systems, see e.g.\ \cite{Khovanskii1991,Sottile2011}, and
(ii)~reaction networks with (generalized) mass-action kinetics,
see e.g.\ \cite{HornJackson1972,Horn1972,Feinberg1972} and \cite{MuellerRegensburger2012,MuellerRegensburger2014,Mueller2016,MuellerHofbauerRegensburger2019}.
We depict a hierarchy of systems in Figure~\ref{fig:1},
ranging from system~\eqref{eq:problemBC} to fewnomial and generalized mass-action systems.

On the other hand, 
system~\eqref{eq:problemBC} 
allows for finitely or infinitely many, strict or non-strict inequalities
and hence for another area of application:
(iii) semi-algebraic sets~\cite{Bochnak1998,Basu2003} with positivity conditions,
that is, finite unions of sets
given by equations ${A \, x^B = 0}$ and strict inequalities ${A' \, x^B > 0}$ 
with $x \in \R^n_>$, $A \in \R^{\rka \times m}$, $A' \in \R^{\rka' \times m}$, 
and $B \in \N_0^{n \times m}$ (over integers, rather than over reals).
For a survey on effective quantifier elimination including applications with positivity conditions,
see~\cite{Sturm2017},
and for the existence of positive solutions to a class of parametrized systems of polynomial inequalities,
see~\cite{HongSturm2018}.

\begin{figure}[t]
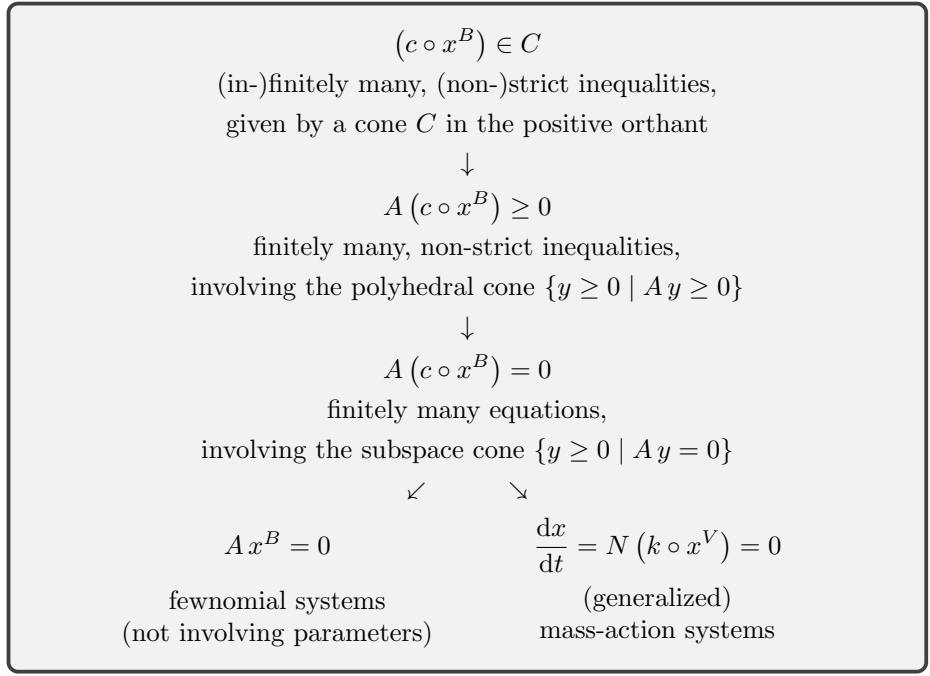

\begin{tcolorbox}
\vspace{-3ex}
\begin{gather*}
\left(c \circ x^B\right) \in C \\ 
\text{(in-)finitely many, (non-)strict inequalities,} \\ \text{given by a cone $C$ in the positive orthant} 
\\
\downarrow \\
A \left(c \circ x^B\right) \ge 0 \\ 
\text{finitely many, non-strict inequalities,} \\ \text{involving the polyhedral cone $\{ y \ge 0 \mid A \, y \ge 0 \}$} \\
\downarrow \\
A \left(c \circ x^B\right) = 0 \\ 
\text{finitely many equations,} \\ \text{involving the subspace cone $\{ y \ge 0 \mid A \, y = 0 \}$} \\
\rotatebox[origin=c]{-45}{$\downarrow$} \hspace{1cm} \rotatebox[origin=c]{45}{$\downarrow$} \\
\parbox[t]{5cm}{\centering $A \, x^B = 0$ \\[2ex] fewnomial systems \\ (not involving parameters)}
\parbox[t]{5cm}{\centering $\displaystyle \dd{x}{t} = N \left(k \circ x^V\right) = 0$ \\[1ex] (generalized) \\ mass-action systems}
\end{gather*}
\end{tcolorbox}
\label{fig:1}
\caption{A hierarchy of (parametrized) systems of generalized polynomial equations and inequalities for positive variables.} 
\end{figure}

The primary contributions of this work are:
\begin{enumerate}
\item
We identify the relevant geometric objects of system~\eqref{eq:problemBC},
namely, the {\em coefficient set}~$P$ (a bounded set), 
the monomial difference subspace~$\LL$,
and the monomial dependency subspace~$D$.

More specifically,
we consider partitions of the monomials into {\em classes},
corresponding to a decomposition of the coefficient cone (as a direct product),
and we obtain the coefficient set $P = C \cap \Delta$ 
by intersecting the coefficient cone $C$ with a direct product $\Delta$ of simplices (or appropriate affine subspaces) on the classes.

The monomial difference subspace $L$ is determined by the affine monomial spans of the classes,
whereas the monomial dependency subspace $D$ captures affine dependencies within {\em and} between classes.
In particular, the {\em monomial dependency}~$\dep = \dim D$ is crucial.
\item
As our main result,
we rewrite polynomial inequalities in terms of $\dep$ binomial equations on $P$,
involving $\dep$ monomials in the parameters.

In particular, we establish an {\em explicit} bijection between
the solution set \[ Z_c = \{ x \in \R^n_> \mid \left( c \circ x^B \right) \in C \} \] of system~\eqref{eq:problemBC} 
and the solution set on $P$,
 \[ Y_c = \{ y \in P \mid y^z = c^z \text{ for all } z \in D \} , \]
via exponentiation.
\item
We obtain a problem classification.
The dependency $d$ and the dimension of~$P$ indicate the complexity of a system.

If $\dep=0$ (the ``very few''-nomial case), solutions exist (for all parameters) and can be parametrized explicitly.
If $\dep>0$, 
the treatment of the $\dep$~equations requires additional objects 
such as sign-characteristic functions (introduced in this work).
\end{enumerate}

Our results lay the groundwork for a novel approach to ``positive algebraic geometry''.
They are based on methods from linear algebra and convex/polyhedral geometry
(and complemented by techniques from analysis).

In applications to fewnomial systems, we extend the standard setting in three ways:
we allow for generalized polynomial {\em inequalities},
we assign a positive {\em parameter} to every monomial,
and we consider partitions of the monomials into {\em classes}.
In applications to (generalized) mass-action systems,
parameters and classes are standard,
but our {\em classes} differ from the ``linkage classes'' of reaction network theory.
See also Example~\ref{exa:two-component} and the footnote there.

Indeed, we illustrate our results (in particular, the relevant geometric objects) through three examples 
from real fewnomial and reaction network theory,
all of which involve trinomials.
First, we compute an explicit parametrization of the solution set
for a ``very few''-nomial system (with $\dep=0$) arising from a reaction network.
Second, we provide a ``solution formula'' for univariate trinomials (with $\dep=1$).
To this end, we introduce sign-characteristic functions and corresponding discriminants and ``roots''.
Third, 
we consider a system of one trinomial equation and one tetranomial inequality (with $\dep=2$ and two classes)
and provide an explicit parametrization of the region for multistationarity of the underlying reaction network.

We foresee applications of our approach to many more problems
such as 
existence and uniqueness of positive solutions, for given or all parameters, 
upper bounds for the number of solutions/components of fewnomial systems,
and extensions of classical results from reaction network theory. 
For first applications to real fewnomial theory, see the parallel work~\cite{MuellerRegensburger2023b}.
There, we improve upper bounds (on the number of positive solutions)
for (i) $n$ trinomials involving ${n+2}$ monomials in $n$ variables, given in~\cite{Bihan2021},
and (ii) one trinomial and one $t$-nomial (with $t\ge3$) in two variables, given in~\cite{Li2003,Koiran2015a}.
Further, for two trinomials ($t=3$), we refine the known upper bound of five in terms of the exponents.
For a characterization of the existence of a unique solution
(and a resulting multivariate Descartes' rule of signs), see the very recent work~\cite{DeshpandeMueller2024}.

{\bf Organization of the work.} 
In Section~\ref{sec:help}, 
we formally introduce the geometric objects and auxiliary matrices
required to rewrite system~\eqref{eq:problemBC}. 
In Section~\ref{sec:main}, we state and prove our main results, Theorem~\ref{thm:main} and Proposition~\ref{pro:bij}.
Moreover, 
in~\ref{sec:d0}, we discuss 
explicit parametrizations of the solution set beyond monomial parametrizations,
and 
in~\ref{sec:decomp}, we consider systems that are decomposable into subsystems given by the classes.
Finally, in Section~\ref{sec:app},
we apply our results to three examples,
all of which involve trinomials.
We briefly summarize all examples at the beginning of the section.

In Appendix~\ref{app:sc}, we introduce sign-characteristic functions,
which serve as a key technique in the analysis of trinomials.

\subsection*{Notation}

We denote the positive real numbers by $\R_>$ and the nonnegative real numbers by $\R_\ge$.
We write $x>0$ for $x \in \R^n_>$ and $x \ge 0$ for $x \in \R^n_\ge$.
For $x \in \R^n$, we define $\e^x \in \R^n_>$ componentwise, and for $x \in \R^n_>$, we define $\ln x \in \R^n$.

For $x \in \R^n_>, \, y \in \R^n$, we define the generalized monomial $x^y = \prod_{i=1}^n (x_i)^{y_i} \in \R_>$,
and for $x \in \R^n_>, \, Y = ( y^1 \ldots \, y^m) \in \R^{n \times m}$, 
we define the vector of monomials $x^Y \in \R^m_>$ via $(x^Y)_i =x^{y^i}$. 

For $x,y \in \R^n$, we denote their scalar product by $x \cdot y \in \R$
and their componentwise (Hadamard) product by $x \circ y \in \R^n$,
and for 
\[
\alpha \in \R^\ell , \quad x = \begin{pmatrix} x^1 \\ \vdots \\ x^\ell \end{pmatrix} \in \R^n
\]
with $x^1 \in \R^{n_1}$, \ldots, $x^\ell \in \R^{n_\ell}$ and $n = n_1 + \cdots + n_\ell$,
we introduce
\[
\alpha \odot x = \begin{pmatrix} \alpha_1 \, x^1 \\ \vdots \\ \alpha_\ell \, x^\ell \end{pmatrix} \in \R^n .
\]

We write $\id_n \in \R^{n \times n}$ for the identity matrix and $\ones_n \in \R^n$ for the vector with all entries equal to one.
Further, for $n >1$, we introduce the incidence matrix $\IL_n \in \R^{n \times (n-1)}$ of the (star-shaped) graph 
$n \to 1, \, n \to 2, \, \ldots, \, n \to (n-1)$
with $n$ vertices and $n-1$ edges, 
\[
\IL_n = \begin{pmatrix} \id_{n-1} \\ - \ones_{n-1}^\trans \end{pmatrix} , \quad \text{that is,} \quad
\IL_2 = \begin{pmatrix} 1 \\ - 1 \end{pmatrix} , \;
\IL_3 = \begin{pmatrix} 1 & 0 \\ 0 & 1 \\ -1 & -1 \end{pmatrix} , \text{ etc.}
\]
Clearly, $\ker \IL_n = \{0\}$ and $\ker \IL_n^\trans = \im \ones_n$. 


\section{Geometric objects and auxiliary matrices} \label{sec:help}

From the introduction,
recall the exponent matrix $B \in \R^{n \times m}$,
the positive parameter vector $c \in \R^m_>$,
and the coefficient cone $C \subseteq \R^m_>$. 
We introduce geometric objects and auxiliary matrices
required to rewrite the parametrized system of generalized polynomial inequalities~\eqref{eq:problemBC}.

\begin{subequations}
In the following, we assume that $C$ is not empty (as a necessary condition for the existence of solutions).
Further, we assume that
\begin{equation}
C = 
C_1 \times \cdots \times C_\ell
\end{equation}
with
\begin{equation}
C_i \subseteq \R^{m_i}_> ,
\end{equation}
$m_1+\ldots+m_\ell = m$, and $\ell\ge1$,
that is, $C$ is a direct product of cones.
\end{subequations}

Accordingly, 
\begin{equation}
B = \begin{pmatrix} B_1 & \ldots & B_\ell \end{pmatrix} \in \R^{n \times m}
\end{equation}
with $\ell$ blocks $B_i \in \R^{n \times m_i}$ and 
\begin{equation}
c= \begin{pmatrix} c^1 \\ \vdots \\ c^\ell \end{pmatrix} \in \R^m_>
\end{equation} 
with $c^i \in \R^{m_i}_>$.

The decomposition $C = C_1 \times \cdots \times C_\ell$
induces a partition of the indices $\{1,\ldots,m\}$ into $\ell$ {\em classes}.
Correspondingly, the columns of $B=(b^1,\ldots,b^m)$, 
and hence the monomials $x^{b^j}$, $j=1,\ldots,m$
are partitioned into classes.
Our main results hold for any partition that permits a direct product form of $C$,
but the finest partition allows us to maximally reduce the problem dimension.

Indeed,
going from a cone to a bounded set reduces dimensions by one per class.
Hence, we introduce the direct product
\begin{subequations}
\begin{equation}
\Delta = \Delta_{m_1-1} \times \cdots \times \Delta_{m_\ell-1} 
\end{equation}
of the standard simplices
\begin{equation}
\Delta_{m_i-1} = \{ y \in \R^{m_i}_\ge \mid \ones_{m_i} \cdot y = 1 \} 
\end{equation}
\end{subequations}
and define the bounded set
\begin{subequations}
\begin{equation}
P = C \cap \Delta .
\end{equation}
For computational simplicity in examples, 
we often use sets of the form $\tilde \Delta_{m_i-1} = \{ y \in \R^{m_i}_\ge \mid u \cdot y = 1 \}$ 
for some vector $u \in \relint C_i^*$, where $C_i^*$ is the dual cone of $C_i$.

Clearly,
\begin{equation}
P = P_1 \times \cdots \times P_\ell
\end{equation}
with
\begin{equation}
P_i = C_i \cap \Delta_{m_i-1} .
\end{equation}
We call $P$ the {\em coefficient set}.
\end{subequations}

Now, let
\begin{equation}
\IL = \begin{pmatrix} \IL_{m_1} & & 0 \\ & \ddots & \\ 0 & & \IL_{m_\ell} \end{pmatrix} \in \R^{m \times (m-\ell)}
\end{equation}
be the $\ell \times \ell$ block-diagonal (incidence) matrix
with blocks $\IL_{m_i} \in \R^{m_i \times (m_i-1)}$,
and let
\begin{subequations}
\begin{equation}
M = B \, \IL \in \R^{n\times(m-\ell)} .
\end{equation}
Clearly, $\ker \IL = \{0\}$ and $\ker \IL^\trans = \im \ones_{m_1} \times \cdots \times \im \ones_{m_\ell}$.
The matrix
\begin{equation}
M = \begin{pmatrix} B_1 \IL_{m_1} & \ldots &  B_\ell \IL_{m_\ell} \end{pmatrix}
\end{equation}
\end{subequations}
records the differences of the columns of $B$ within classes,
explicitly, between the first $m_i-1$ columns of $B_i$ and its last column, for all $i=1,\ldots,\ell$. 
Hence,
\begin{equation}
\LL = \im M \subseteq \R^n
\end{equation}
is the sum of the linear subspaces associated with the affine spans of the columns of $B$ in the $\ell$ classes.
We call $\LL$ the {\em monomial difference subspace}.
Further, we call
\begin{equation}
\dep = \dim (\ker M)
\end{equation}
the {\em monomial dependency}.

\begin{pro} \label{pro:dep}
The monomial dependency can be determined as
\[
\dep = m-\ell-\dim \LL .
\]
\end{pro}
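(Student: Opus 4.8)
The plan is to apply the rank--nullity theorem to the linear map represented by the matrix $M$. By definition, $M = B\IL \in \R^{n \times (m-\ell)}$, so $M$ induces a linear map $\R^{m-\ell} \to \R^n$ whose domain has dimension $m-\ell = (m_1-1) + \cdots + (m_\ell-1)$ (since $\IL$ has $m-\ell$ columns by construction, one block $\IL_{m_i}$ contributing $m_i-1$ columns per class).

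First I would invoke the dimension formula $\dim(\ker M) + \dim(\im M) = m-\ell$ for this map. Then, since $\LL = \im M$ by definition, we have $\dim(\im M) = \dim \LL$, and substituting gives $\dep = \dim(\ker M) = (m-\ell) - \dim \LL$, which is the claim.

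There is essentially no obstacle here: the statement is a direct bookkeeping consequence of rank--nullity once one notes that $M$ has exactly $m-\ell$ columns. The only point worth stating explicitly is the column count of $\IL$ (hence of $M$), which follows immediately from its block-diagonal form with blocks $\IL_{m_i} \in \R^{m_i \times (m_i-1)}$ and the identity $\sum_i (m_i-1) = m - \ell$.
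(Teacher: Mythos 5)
Your proposal is correct and is essentially identical to the paper's own proof: both apply the rank--nullity theorem to $M \in \R^{n \times (m-\ell)}$ and use $\LL = \im M$ and $\dep = \dim(\ker M)$. Your explicit verification of the column count of $\IL$ is a harmless extra detail that the paper leaves implicit.
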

\begin{proof}
Clearly, $\dim(\im M) + \dim(\ker M) = m-\ell$, by the rank-nullity theorem for $M$. 
\end{proof}

We call a system {\em generic} if $M \in \R^{n \times (m-\ell)}$ has full (row or column) rank.

\begin{pro} 
A system is generic if and only if $\LL=\R^n$ or $\dep=0$.
\end{pro}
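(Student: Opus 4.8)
The plan is to unwind the definition of ``full rank'' for $M \in \R^{n \times (m-\ell)}$ into its two constituent cases and to match each case with one of the two conditions in the statement. Since $M$ has $n$ rows and $m-\ell$ columns, it has full \emph{row} rank precisely when $\rank M = n$, and full \emph{column} rank precisely when $\rank M = m-\ell$; the notion ``full (row or column) rank'' is the disjunction of these.

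First I would treat the row-rank case. By definition $\LL = \im M$, hence $\dim \LL = \rank M$, and therefore $M$ has full row rank, i.e.\ $\rank M = n$, if and only if $\im M = \R^n$, that is, $\LL = \R^n$.

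Next I would treat the column-rank case via the rank-nullity theorem, which is exactly the content of Proposition~\ref{pro:dep}: $\dep = \dim(\ker M) = (m-\ell) - \rank M$. Consequently $\dep = 0$ if and only if $\rank M = m-\ell$, i.e.\ if and only if $M$ has full column rank. Combining the two equivalences yields that $M$ has full (row or column) rank if and only if $\LL = \R^n$ or $\dep = 0$, which is the claim.

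There is no genuine obstacle here; the only point requiring a little care is that the two notions of full rank are distinct when $M$ is non-square (and coincide when $n = m-\ell$), but the disjunction in the statement already accounts for both possibilities, so the argument closes without case analysis beyond the two displayed equivalences.
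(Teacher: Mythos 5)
Your proof is correct and follows essentially the same route as the paper: both arguments split ``full (row or column) rank'' into its two cases and identify full row rank with $\LL=\im M=\R^n$ and full column rank with $\ker M=\{0\}$, i.e.\ $\dep=0$ (you phrase the latter via rank--nullity, the paper via $\ker M=\{0\}$ directly, which is an immaterial difference). No gaps.
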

\begin{proof}
Clearly, $M$ has full (row or column) rank
if and only if
$\ker M^\trans = \{0\}$ or $\ker M = \{0\}$.
The former is equivalent to $\LL=\im M = (\ker M^\trans)^\perp = \R^n$,
the latter is equivalent to $\dep=0$.
\end{proof}

\begin{rem}
In real fewnomial theory,
it is standard to assume $\LL = \R^n$. Otherwise, dependent variables can be eliminated.
In reaction network theory, however, non-generic systems are also relevant.
In both settings, a generic system with $\dep=0$ allows for an explicit parametrization of the solution set;
see our main result and its instance, Theorem~\ref{thm:main} and Corollary~\ref{cor:d0}, below.
\end{rem}

To conclude our exposition,
let
\begin{equation}
\JL = \begin{pmatrix} \ones_{m_1}^\trans & & 0 \\ & \ddots & \\ 0 & & \ones_{m_\ell}^\trans \end{pmatrix} \in \R^{\ell \times m}
\end{equation}
be the $\ell \times \ell$ block-diagonal ``Cayley'' matrix
with blocks $\ones_{m_i}^\trans \in \R^{1 \times m_i}$
and
\begin{equation}
\mBp = \begin{pmatrix} B \\ \JL \end{pmatrix} \in \R^{(n+\ell) \times m} .
\end{equation}
Clearly, $\JL \, \IL = 0$, in fact, $\ker \JL = \im \IL$.
We call 
\begin{equation}
D = \ker \mBp 
\subset \R^m
\end{equation}
the {\em monomial dependency subspace}.

\begin{lem} \label{lem:Gale}
Let $\mG \in \R^{(m-\ell) \times \dep}$ represent a basis for $\ker M$, that is, $\im \mG = \ker M$ (and $\ker \mG = \{0\}$),
and let $\mH = \IL \, \mG \in \R^{m \times \dep}$. 
Then $\mH$ represents a basis of $\ker \mBp$,
that is, 
\[
D = \ker \mBp = \im \mH 
\quad (\text{and }
\dim D = \dep) .
\]
\end{lem}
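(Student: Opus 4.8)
The plan is to verify the two claimed properties of $\mH = \IL\,\mG$ separately: first that $\im \mH \subseteq \ker \mBp$, and second that $\mH$ has rank $\dep$ and that this rank already exhausts $\dim \ker \mBp$, so equality $\im \mH = \ker \mBp$ follows. For the inclusion, I would compute $\mBp \, \mH$ block by block. Since $\mBp = \binom{B}{\JL}$, we have $\mBp\,\mH = \binom{B\,\IL\,\mG}{\JL\,\IL\,\mG}$. The lower block vanishes because $\JL\,\IL = 0$ (stated just before the lemma, in fact $\ker\JL = \im\IL$). The upper block is $B\,\IL\,\mG = M\,\mG = 0$ because $\im\mG = \ker M$. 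Hence $\mBp\,\mH = 0$, i.e. $\im\mH \subseteq \ker\mBp = D$.

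Next I would check that $\mH$ is injective, so that $\dim\im\mH = \dep$. This is immediate: $\ker\IL = \{0\}$ and $\ker\mG = \{0\}$, so $\ker\mH = \ker(\IL\,\mG) = \{0\}$. It remains to show $\dim\ker\mBp \le \dep$, which combined with the inclusion forces equality. The cleanest route is a dimension count: I claim $\ker\mBp = \IL(\ker M)$ exactly. Given $v \in \ker\mBp$, from $\JL v = 0$ and $\ker\JL = \im\IL$ we get $v = \IL w$ for some $w \in \R^{m-\ell}$; moreover $w$ is unique since $\IL$ is injective. Then $0 = B v = B\,\IL\,w = M w$, so $w \in \ker M$, whence $v \in \IL(\ker M) = \im(\IL\,\mG) = \im\mH$. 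This shows $\ker\mBp \subseteq \im\mH$, which together with the first part gives $D = \ker\mBp = \im\mH$, and then $\dim D = \dim\im\mH = \dep$ by injectivity of $\mH$.

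I do not anticipate a genuine obstacle here — the whole statement is a short linear-algebra bookkeeping exercise. The only point requiring a moment's care is the direction $\ker\mBp \subseteq \im\mH$: one must use both defining relations of $\mBp$ (the $\JL$-part to get the preimage $w$ under $\IL$, the $B$-part to pin $w$ down to $\ker M$), and invoke the facts $\ker\JL = \im\IL$ and $\ker\IL = \{0\}$ that were recorded immediately before the lemma. Everything else — $\JL\IL = 0$, $M = B\IL$, $\im\mG = \ker M$, $\ker\mG = \{0\}$ — is either a definition or stated earlier in the excerpt. As an alternative to the explicit two-sided inclusion, one could instead argue purely by dimension: $\im\mH \subseteq \ker\mBp$ with $\dim\im\mH = \dep = \dim\ker M = (m-\ell) - \dim\LL$ (Proposition~\ref{pro:dep}), and separately show $\dim\ker\mBp = (m-\ell) - \dim\LL$ via rank-nullity applied to $\mBp$ together with $\rank\mBp = \ell + \dim\LL$; but the direct inclusion argument above is shorter and more transparent, so that is the version I would write up.
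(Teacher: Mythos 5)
Your proof is correct and takes essentially the same route as the paper's: both parts rest on the block computation $\mBp\,\mH = \left(\begin{smallmatrix} M \\ 0 \end{smallmatrix}\right)\mG = 0$ for the inclusion $\im\mH \subseteq \ker\mBp$, and on the injectivity of $\IL$ and $\mG$ together with $\im\IL = \ker\JL$ for the converse. The only cosmetic difference is that the paper closes with a dimension count, $\dim(\ker M) = \dim(\ker B \cap \im\IL) = \dim(\ker B \cap \ker\JL) = \dim(\ker\mBp)$, whereas you verify the reverse inclusion $\ker\mBp \subseteq \im\mH$ directly by lifting $v \in \ker\mBp$ through $\IL$.
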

\begin{proof}
We show $\im \mH \subseteq \ker \mBp$ and $\dim(\im \mH) = \dim(\ker \mBp)$.
First,
\[
\mBp \, \mH = \begin{pmatrix} B \\ \JL \end{pmatrix} \IL \, \mG =
\begin{pmatrix} M \\ 0 \end{pmatrix} \mG = 0 .
\]
Second, $\ker \mG=\{0\}$ and $\ker I=\{0\}$. Hence, $\ker \mH=\{0\}$ and
\begin{align*}
\dim(\im \mH) = \dep &= \dim(\ker M) \\
&= \dim(\ker B \, \IL) = \dim(\ker B \cap \im \IL) \\
&= \dim(\ker B \cap \ker \JL) = \dim (\ker \mBp) . 
\end{align*}
\end{proof}

For illustrations of all geometric objects and auxiliary matrices,
we refer the reader to Example~\ref{exa:two-component} (two overlapping trinomials in four variables with ${\dep=0}$), 
Example~\ref{exa:tri} (one trinomial in one variable with $\dep=1$), 
and Example~\ref{exa:multi} (one trinomial equation and one tetranomial {\em inequality} in five variables with $\dep = 2$ and $\ell=2$ {\em classes}),
in Section~\ref{sec:app}.
They can be read before or after the presentation of our main results.


\section{Main results} \label{sec:main}

Using the geometric objects and auxiliary matrices introduced in the previous section,
we state and prove our main results, Theorem~\ref{thm:main} and Proposition~\ref{pro:bij} below.
Moreover, 
in Section~\ref{sec:d0}, we discuss 
explicit parametrizations of the solution set, 
and 
in~\ref{sec:decomp}, we consider systems that are decomposable into subsystems given by the classes.

\begin{thm} \label{thm:main}
Consider the parametrized system of generalized polynomial inequalities $(c \circ x^B) \in C$ for the positive variables $x \in \R^n_>$,
given by a real exponent matrix $B \in \R^{n \times m}$, 
a positive parameter vector $c \in \R^m_>$,
and a coefficient cone $C \subseteq \R^m_>$.
The solution set $Z_c = \{ x \in \R^n_> \mid (c \circ x^B) \in C \}$ 
can be written as
\[
Z_c = \{ (y \circ c^{-1})^E \mid y \in Y_c \} \circ \e^{\LL^\perp}
\quad \text{with} \quad
Y_c = \{ y \in P \mid y^z = c^z \text{ for all } z \in D \} .
\]
Here, $P$ is the coefficient set, $D$ is the monomial dependency subspace, $\LL$ is the monomial difference subspace,
and the matrix $E = \IL \, M^*$ is given by the (incidence) matrix $\IL$ and a generalized inverse $M^*$ of $M = B \, \IL$.
\end{thm}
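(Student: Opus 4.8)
The plan is to establish the claimed equality $Z_c = \{ (y \circ c^{-1})^E \mid y \in Y_c \} \circ \e^{\LL^\perp}$ by analyzing the condition $(c \circ x^B) \in C$ through a change of variables that separates the ``direction'' of $c \circ x^B$ in each class from its ``scale'' in that class. First I would observe that $C = C_1 \times \cdots \times C_\ell$ is a direct product of cones in positive orthants, so $(c \circ x^B) \in C$ holds if and only if, for each class $i$, the positive vector $c^i \circ x^{B_i}$ lies on the ray through some point of $P_i = C_i \cap \Delta_{m_i-1}$. Concretely, writing $y^i \in P_i$ for the normalized representative and $\lambda_i > 0$ for the scale, membership is equivalent to the existence of $y = (y^1,\ldots,y^\ell) \in P$ and $\lambda \in \R^\ell_>$ with $c \circ x^B = \lambda \odot y$, i.e.\ $x^B = (\lambda \odot y) \circ c^{-1}$ (using the paper's $\odot$ notation for class-wise scalar multiplication). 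So the task reduces to: for which $x \in \R^n_>$, $y \in P$ does there exist $\lambda \in \R^\ell_>$ solving $x^B = (\lambda \odot y) \circ c^{-1}$, and how to eliminate $\lambda$.

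Next I would take logarithms to turn this into linear algebra: with $\xi = \ln x$, $\mu = \ln \lambda$, $\eta = \ln y$, $\gamma = \ln c$, the equation becomes $B^\trans \xi = \JL^\trans \mu + \eta - \gamma$, where $\JL^\trans \mu$ is exactly the class-wise constant vector $\mu \odot \ones$ lifted to $\R^m$ (this is where the ``Cayley'' matrix $\JL$ enters). The key elimination step is to hit both sides with $\IL^\trans$: since $\IL^\trans \JL^\trans = (\JL \IL)^\trans = 0$ (noted in the excerpt, as $\JL \IL = 0$), we get $M^\trans \xi = \IL^\trans(\eta - \gamma)$, with $M = B\IL$. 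This equation in $\xi$ is solvable (over all $\xi \in \R^n$) if and only if $\IL^\trans(\eta-\gamma) \in \im M^\trans = (\ker M)^\perp$, i.e.\ $z^\trans \IL^\trans(\eta-\gamma) = 0$ for every $z \in \ker M$; via $\mH = \IL\mG$ from Lemma~\ref{lem:Gale}, this says $(\IL\mG)^\trans(\eta - \gamma) = \mH^\trans(\ln y - \ln c) = 0$, i.e.\ $(\ln y - \ln c) \perp \im \mH = D$, which is precisely $y^z = c^z$ for all $z \in D$ — the defining condition of $Y_c$. When this holds, the solution set for $\xi$ is $M^{*\trans}\IL^\trans(\eta-\gamma) + \ker M^\trans$; exponentiating, $x \in (y\circ c^{-1})^{E} \circ \e^{\ker M^\trans}$ with $E = \IL M^*$ (note $E^\trans = M^{*\trans}\IL^\trans$ and $(\ln y - \ln c)$ assembled class-wise gives $\IL^\trans$ acting correctly since $\IL^\trans\gamma$ and $\IL^\trans\eta$ only see within-class differences), and $\ker M^\trans = (\im M)^\perp = \LL^\perp$, giving the factor $\e^{\LL^\perp}$.

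One subtlety I would need to check carefully is that recovering $\lambda = \e^\mu \in \R^\ell_>$ from a solution $\xi$ imposes no extra constraint: given $\xi$ with $M^\trans\xi = \IL^\trans(\eta-\gamma)$, the full vector $B^\trans\xi - \eta + \gamma$ must lie in $\im \JL^\trans = \ker(\IL^\trans)^\perp$-complement — more precisely in the span of the class-indicator vectors — which is exactly guaranteed by $\IL^\trans(B^\trans\xi - \eta+\gamma) = 0$ together with $\ker \JL = \im\IL$ (stated in the excerpt); and since we only need $\lambda$ positive, not equal to anything prescribed, $\mu$ is then read off freely with no sign obstruction. I expect this bookkeeping — that eliminating $\lambda$ loses no information and introduces no spurious solutions, and that the generalized inverse $M^*$ (any choice) yields the same coset — to be the main obstacle, the rest being the linear-algebraic identities $\JL\IL = 0$, $\ker\JL = \im\IL$, $\ker M^\trans = \LL^\perp$, and Lemma~\ref{lem:Gale} applied in sequence. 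Finally I would note the independence of the answer from the choice of $M^*$: two generalized inverses differ by something vanishing on $\im M^\trall$... on $\im M = \LL$, hence $E$ applied to $\ln(y\circ c^{-1}) \in$ the relevant coset differs by an element of $\ker M^\trans = \LL^\perp$, which is absorbed into the $\e^{\LL^\perp}$ factor.
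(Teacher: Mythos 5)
Your proposal is correct and follows essentially the same route as the paper's proof: normalize $c\circ x^B$ to $\alpha\odot y$ with $y\in P$, take logarithms, left-multiply by $\IL^\trans$ to eliminate the scales, characterize solvability via $\ker M$ and $\mH=\IL\,\mG$ (giving $y^z=c^z$ for $z\in D$), and recover $x$ from a particular solution $E^\trans\ln(y\circ c^{-1})$ plus $\ker M^\trans=\LL^\perp$. You even make explicit two points the paper leaves implicit — that multiplying by $\IL^\trans$ loses no information because $\ker\IL^\trans=\im\JL^\trans$ so $\alpha$ can always be recovered positively, and that the choice of generalized inverse $M^*$ only shifts the particular solution within $\LL^\perp$ — so no further comparison is needed.
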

\begin{proof}
We reformulate the generalized polynomial inequalities for $x \in \R^n_>$,
\begin{equation*} \label{eq:prob} \tag{I}
\left(c \circ x^B\right) \in C ,
\end{equation*}
in a series of equivalent problems:
\begin{align*}
c \circ x^B = \bar y, & \quad \bar y \in C , \\
c \circ x^B = \alpha \odot y, & \quad \alpha \in \R^\ell_>, \, y \in P , \\
x^B = \alpha \odot (y \circ c^{-1}), & \quad \alpha \in \R^\ell_>, \, y \in P , \\
B^\trans \ln x = \ln \alpha \odot \ones_m + \ln (y \circ c^{-1}), & \quad \alpha \in \R^\ell_>, \, y \in P , \\
\underbrace{\IL^\trans B^\trans}_{M^\trans} \ln x = \IL^\trans \ln (y \circ c^{-1}), & \quad y \in P . \label{eq:last} \tag{II}
\end{align*}
In the last two steps, we applied the logarithm and multiplied with $\IL^\trans$.

As in Lemma~\ref{lem:Gale},
let $\mG \in \R^{(m-\ell) \times \dep}$ represent a basis of $\ker M$, in particular, $\im \mG = \ker M$,
and let $\mH = \IL \, \mG \in \R^{m \times \dep}$. 
We reformulate the solvability of the linear equations~\eqref{eq:last} for $\ln x \in \R^n$ 
in a series of equivalent problems:
\begin{gather*}
\IL^\trans \ln (y \circ c^{-1}) \in \im M^\trans = \ker \mG^\trans , \\
\underbrace{\mG^\trans \IL^\trans}_{\mH^\trans} \ln (y \circ c^{-1}) = 0 , \\
(y \circ c^{-1})^{\mH} = \ones_\dep , \\
y^{\mH} = c^{\mH} . \label{eq:solve} \tag{III}
\end{gather*}
In the last two steps, we applied exponentiation and multiplied out.

If \eqref{eq:last} has a solution,
that is, if \eqref{eq:solve} holds,
it remains to determine a particular solution (of the inhomogeneous system)
and the solutions of the homogeneous system.

Let $M^* \in \R^{(m-\ell) \times n}$ be a generalized inverse of $M$,
that is, $M M^* M = M$,
and $E = \IL \, M^* \in \R^{m \times n}$.
If \eqref{eq:solve} holds, \eqref{eq:last} is equivalent to
\begin{gather*}
\ln x = \ln x^* + \ln x' 
\intertext{with}
\ln x^* = \underbrace{(M^*)^\trans \IL^\trans}_{E^\trans} \ln (y \circ c^{-1}) 
\quad\text{and}\quad
\ln x' \in \ker M^\trans = \LL^\perp . 
\end{gather*}

After exponentiation, \eqref{eq:last} and hence the original problem~\eqref{eq:prob} are equivalent to
\[
x = (y \circ c^{-1})^E  \circ \e^v, \, v \in {\LL^\perp}, \, y^{\mH} = c^{\mH}, \, y \in P .
\]
Finally, by Lemma~\ref{lem:Gale}, $D = \im \mH$,
and hence $y^{\mH} = c^{\mH}$ is equivalent to 
$y^z = c^z$ for all $z \in D$.
\end{proof}

Theorem~\ref{thm:main} can be read as follows:
In order to determine the solution set $Z_c = \{ x \in \R^n_> \mid (c \circ x^B) \in C \}$,
first determine the {\em solution set on the coefficient set},
\begin{equation} 
Y_c = \{ y \in P \mid y^z = c^z \text{ for all } z \in D \} .
\end{equation}
The coefficient set $P$ is determined by the coefficient cone~$C$ (and its classes),
and the dependency subspace $D$ is determined by the exponent matrix~$B$ ({\em and} the classes of $C$).
Explicitly, using $D = \im \mH$ with $\mH \in \R^{m \times \dep}$, there are $\dep$ binomial equations
\begin{equation} 
y^{\mH} = c^{\mH} , 
\end{equation}
which depend on the parameters via $\dep$ monomials $c^{\mH}$.

To a solution $y \in Y_c$ on the coefficient set,
there corresponds the actual solution $x = (y \circ c^{-1})^E \in Z_c$.
In fact, if (and only if) $\dim L < n$, 
then $y \in Y_c$ corresponds to an exponential manifold of solutions, $x \circ \e^{L^\perp} \subset Z_c$.
By Proposition~\ref{pro:bij} below, 
the set $Z_c / e^{\LL^\perp}$, that is, the equivalence classes of $Z_c$ (given by $x' \sim x$ if $x'=x \circ \e^v$ for some $v \in L^\perp$),
and the set $Y_c$ are in one-to-one correspondence.
\begin{pro} \label{pro:bij}
There is a bijection between $Z_c / e^{\LL^\perp}$ and $Y_c$.
\end{pro}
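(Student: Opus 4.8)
The plan is to extract the bijection directly from the chain of equivalences established in the proof of Theorem~\ref{thm:main}. That proof shows that $x \in Z_c$ if and only if there exists $y \in Y_c$ with $x = (y \circ c^{-1})^E \circ \e^v$ for some $v \in \LL^\perp$. So the natural candidate map is
\[
\Phi \colon Y_c \to Z_c / \e^{\LL^\perp}, \qquad \Phi(y) = \big[ (y \circ c^{-1})^E \big],
\]
and I would prove it is well defined, surjective, and injective.

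First, well-definedness and surjectivity are essentially immediate from Theorem~\ref{thm:main}: for $y \in Y_c$ the element $(y \circ c^{-1})^E$ lies in $Z_c$ (it is the particular solution $x^*$ in the proof), so its class is a legitimate element of $Z_c/\e^{\LL^\perp}$; and the theorem's formula $Z_c = \{(y\circ c^{-1})^E \mid y \in Y_c\} \circ \e^{\LL^\perp}$ says exactly that every class in $Z_c/\e^{\LL^\perp}$ is hit. The real content is injectivity. Suppose $y^{(1)}, y^{(2)} \in Y_c$ give the same class, i.e. $(y^{(2)} \circ c^{-1})^E = (y^{(1)} \circ c^{-1})^E \circ \e^v$ for some $v \in \LL^\perp$. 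I want to conclude $y^{(1)} = y^{(2)}$. Writing $w^{(i)} = \ln(y^{(i)} \circ c^{-1}) \in \R^m$, the hypothesis becomes $E^\trans w^{(2)} = E^\trans w^{(1)} + v$ with $v \in \LL^\perp = \ker M^\trans$. The key step is to track this back through the equivalences: both $y^{(i)}$ satisfy $\IL^\trans w^{(i)} \in \im M^\trans$ (this is what $y^{(i)} \in Y_c$, i.e. $(y^{(i)})^{\mH} = c^{\mH}$, unwinds to, via Lemma~\ref{lem:Gale}). Since $E^\trans = (M^*)^\trans \IL^\trans$ and $M^*$ is a generalized inverse of $M$, one has $M^\trans E^\trans = M^\trans (M^*)^\trans \IL^\trans = (M M^* M)^\trans$ applied appropriately — more precisely, on the subspace $\im M^\trans = \{ M^\trans \ln x : \ln x \in \R^n\}$ the composition $M^\trans \circ E^\trans$ acts as the identity: if $\IL^\trans w = M^\trans u$ then $M^\trans E^\trans w = M^\trans (M^*)^\trans M^\trans u = (M M^* M)^\trans u = M^\trans u = \IL^\trans w$. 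Applying $M^\trans$ to the hypothesis and using $M^\trans v = 0$ gives $\IL^\trans w^{(2)} = \IL^\trans w^{(1)}$, hence $\IL^\trans(w^{(2)} - w^{(1)}) = 0$, i.e. $w^{(2)} - w^{(1)} \in \ker \IL^\trans = \im \ones_{m_1} \times \cdots \times \im \ones_{m_\ell}$.

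It then remains to upgrade $w^{(2)} - w^{(1)} \in \ker \IL^\trans$ to $y^{(1)} = y^{(2)}$, and this is where the coefficient set $P$ (rather than the cone $C$) does its job. Since $c$ is fixed, $w^{(2)} - w^{(1)} \in \ker \IL^\trans$ means $\ln y^{(2)} - \ln y^{(1)} \in \im \ones_{m_1} \times \cdots \times \im \ones_{m_\ell}$, i.e. within each class $i$ the vectors $y^{(1)}_i, y^{(2)}_i \in P_i$ are positive scalar multiples of one another: $y^{(2)}_i = \lambda_i \, y^{(1)}_i$ for some $\lambda_i > 0$. But $P_i = C_i \cap \Delta_{m_i-1}$ lies in the hyperplane $\ones_{m_i} \cdot y = 1$ (or $u \cdot y = 1$ in the variant), and two positive scalar multiples of the same vector that both satisfy such a linear normalization must coincide, forcing $\lambda_i = 1$ for every $i$. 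Hence $y^{(1)} = y^{(2)}$ and $\Phi$ is injective. I expect the main obstacle to be purely expository: stating cleanly the identity $M^\trans E^\trans|_{\im M^\trans} = \mathrm{id}$ from $M M^* M = M$ and confirming that the unwinding of $y \in Y_c$ into "$\IL^\trans \ln(y \circ c^{-1}) \in \im M^\trans$" is faithfully reversible — both are already implicitly contained in the proof of Theorem~\ref{thm:main}, so the argument is short once those pieces are assembled, and the normalization built into $P$ is what makes injectivity (as opposed to injectivity only up to class-wise scaling) work.
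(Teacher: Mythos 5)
Your proposal is correct, and every step checks out: the identity $M^\trans E^\trans w = \IL^\trans w$ on the set $\{w : \IL^\trans w \in \im M^\trans\}$ does follow from $M M^* M = M$, the unwinding of $y \in Y_c$ into $\IL^\trans \ln(y \circ c^{-1}) \in \im M^\trans$ is exactly the chain (III) from the proof of Theorem~\ref{thm:main} read backwards via $\ker \mG^\trans = (\ker M)^\perp = \im M^\trans$, and the final normalization step using $\ones_{m_i} \cdot y = 1$ is what kills the residual class-wise scaling. However, your route is genuinely different from the paper's. The paper does not prove injectivity of the forward map at all; instead it constructs an explicit inverse: to $x \in Z_c$ it assigns $\bar y = c \circ x^B \in C$ and then the unique point $y = \rho \cap \Delta$ where $\rho$ is the product of rays through the class-blocks of $\bar y$. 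That argument is shorter and more geometric, but it silently relies on the fact that the assignment is constant on equivalence classes $x \circ \e^{\LL^\perp}$ (which holds because $v \in \LL^\perp = \ker \IL^\trans B^\trans$ gives $B^\trans v \in \ker \IL^\trans$, so $(x \circ \e^v)^B = \alpha \odot x^B$ for some $\alpha \in \R^\ell_>$, and normalization onto $\Delta$ forgets $\alpha$) and that the two maps are mutually inverse; neither is spelled out. Your algebraic injectivity argument buys a self-contained verification that never needs to descend a map to the quotient, at the cost of invoking the generalized-inverse identity explicitly; the paper's version buys brevity and a concrete geometric description of the inverse correspondence. Both are valid proofs of the proposition.
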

\begin{proof}
By Theorem~\ref{thm:main},
to every $y \in Y_c$, there corresponds the set $x \circ e^{\LL^\perp} \subseteq Z_c$ with $x = (y \circ c^{-1})^E \in Z_c$.
Conversely, 
by the problem definition $c \circ x^B \in C$,
to every $x \in Z_c$, there corresponds a unique $\bar y = (c \circ x^B) \in C$
and, after intersecting the product of rays $\rho = \{\alpha \odot \bar y \mid \alpha \in \R_>^\ell\}$ with $\Delta$ (the direct product of $\ell$ simplices or appropriate affine subspaces), 
a unique $y = (\rho \cap \Delta) \in Y_c$.
\end{proof}
%
%

An important special case arises if $\dep=0$ (the ``very few''-nomial case) and hence $Y_c = P$.
Then the solution set $Z_c$ in Theorem~\ref{thm:main}
has an explicit parametrization.

\begin{cor} \label{cor:d0}
If $\dep=0$, 
then
\[
Z_c = \{ (y \circ c^{-1})^E \mid y \in P \} \circ \e^{\LL^\perp} ,
\]
in particular, the parametrization is explicit,
and there exists a solution for all~$c$.
The solution is unique
if and only if $\dim P = 0$ and $\dim \LL^\perp = 0$.
\end{cor}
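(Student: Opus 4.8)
The plan is to specialize Theorem~\ref{thm:main} to the case $\dep = 0$ and then read off the uniqueness condition directly. First I would observe that $\dep = 0$ means $\ker M = \{0\}$, so by Lemma~\ref{lem:Gale} the matrix $\mH$ has $\dep = 0$ columns and $D = \ker \mBp = \{0\}$. Consequently, the condition ``$y^z = c^z$ for all $z \in D$'' in the definition of $Y_c$ is vacuous, so $Y_c = P$. Substituting this into the formula of Theorem~\ref{thm:main} immediately gives $Z_c = \{ (y \circ c^{-1})^E \mid y \in P \} \circ \e^{\LL^\perp}$. The parametrization is then explicit in the sense that it requires no solving of binomial equations; and since $P = C \cap \Delta$ is nonempty whenever $C$ is nonempty (intersecting a cone in the positive orthant with the product of simplices always yields a nonempty set — scale any point of $C_i$ to meet $\Delta_{m_i - 1}$), a solution exists for all $c$.

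For the uniqueness claim, I would argue that $|Z_c| = 1$ if and only if both factors in the product description are singletons. The ``if'' direction is immediate: if $\dim P = 0$ then $P$ is a single point (it is nonempty, convex, and zero-dimensional), so $\{ (y \circ c^{-1})^E \mid y \in P \}$ is a single point; and if $\dim \LL^\perp = 0$ then $\e^{\LL^\perp} = \{\ones_n\}$, so the Hadamard product with it does nothing, leaving $Z_c$ a singleton. For the ``only if'' direction I would use the bijection of Proposition~\ref{pro:bij}: $Z_c / \e^{\LL^\perp}$ is in one-to-one correspondence with $Y_c = P$, so if $Z_c$ is a singleton then $P$ is a singleton, forcing $\dim P = 0$; and then, since the fibers of the quotient map are the cosets $x \circ \e^{\LL^\perp}$, a singleton $Z_c$ forces each such coset to be a singleton, i.e. $\e^{\LL^\perp} = \{\ones_n\}$, which (as $v \mapsto \e^v$ is injective on $\R^n$) means $\LL^\perp = \{0\}$, i.e. $\dim \LL^\perp = 0$.

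The only mildly delicate point — and the one I would flag as the main thing to get right rather than a genuine obstacle — is the nonemptiness of $P = C \cap \Delta$, used both for the existence-for-all-$c$ statement and to ensure that ``$\dim P = 0$'' really does imply ``$P$ is a point'' rather than ``$P$ is empty''. This follows because $C = C_1 \times \cdots \times C_\ell$ is a nonempty product of cones in positive orthants, and each $C_i \subseteq \R^{m_i}_>$ contains some $y^i > 0$, which can be rescaled by the positive scalar $1/(\ones_{m_i} \cdot y^i)$ to lie in $\Delta_{m_i-1} \cap C_i = P_i$ (using that $C_i$ is a cone). If one uses the variant sets $\tilde\Delta_{m_i-1}$ defined via $u \in \relint C_i^*$, the same rescaling argument works with $\ones_{m_i}$ replaced by $u$, since $u \cdot y^i > 0$ for $y^i \in C_i \setminus \{0\}$. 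With these observations the corollary is entirely a matter of assembling already-proven facts, so the proof should be short.
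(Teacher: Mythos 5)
Your proposal is correct and follows exactly the route the paper intends: the corollary is stated as an immediate specialization of Theorem~\ref{thm:main}, obtained by noting that $\dep=0$ forces $D=\{0\}$ and hence $Y_c=P$, with existence coming from the nonemptiness of $P$ (rescaling points of the nonempty cone $C$ into $\Delta$) and uniqueness read off via Proposition~\ref{pro:bij}. The only caveat worth recording is that your step ``$\dim P=0$ implies $P$ is a single point'' uses convexity of $P$, which holds in the paper's motivating polyhedral setting but is not guaranteed for the ``arbitrary cone'' $C$ the paper nominally allows; this is an ambiguity inherited from the statement itself rather than a defect of your argument.
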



\subsection{\texorpdfstring{Monomial dependency zero 
\\ and monomial parametrizations}{}} 
\label{sec:d0}

If the monomial dependency is zero ($\dep=0$) {\em and} the coefficient set consists of a point ($P=\{ y \}$), 
then the solution set has an exponential/monomial parametrization,
\begin{equation}
Z_c = x \circ e^{\LL^\perp}
\quad\text{with}\quad
x = (y \circ c^{-1})^E ,
\end{equation}
involving the original parameters $c$ via the particular solution $x$,
by~Corollary~\ref{cor:d0}.
It is well known that such sets are solutions to binomial equations.
This can be seen already from the coefficient set/cone.
Indeed, 
\begin{subequations}
\begin{equation}
y = \begin{pmatrix} y^{1} \\ \vdots \\ y^{\ell} \end{pmatrix} \in \R^m_>
\end{equation}
with blocks $y^{i} \in \R^{m_i}_>$ on the $\ell$ classes, that is,
\begin{equation}
P = \{ y^{1} \} \times \cdots \times \{ y^{\ell} \} ,
\end{equation}
and
\begin{equation}
C= \rho_1 \times \cdots \times \rho_\ell
\end{equation}
\end{subequations}
is a direct product of the rays $\rho_i = \{ \alpha \, \bar y^{i} \mid \alpha>0 \} \subset \R^{m_i}_>$.
Hence,
\begin{equation}
C= \ker A \cap \R^m_>
\end{equation}
with a block-diagonal matrix
\begin{subequations}
\begin{equation}
A = \begin{pmatrix} A_1 & & 0 \\ & \ddots & \\ 0 & & A_\ell \end{pmatrix} \in \R^{(m-\ell)\times m}
\end{equation}
with blocks
\begin{equation}
A_i = \IL_{m_i}^\trans \diag((y^{i})^{-1}) \in \R^{(m_i-1)\times m_i} ,
\end{equation}
having ``binomial rows''.
In compact form, 
\begin{equation}
A = \IL^\trans \diag(y^{-1}) ,
\end{equation}
\end{subequations}
and hence the system is given by the binomial equations
\begin{equation} \label{eq:beyond}
A \left( c \circ x^B \right) = \IL^\trans \diag(c \circ y^{-1}) \, x^B = 0 ,
\end{equation}
involving combined parameters/coefficients $c \circ y^{-1}$.

\subsubsection*{Beyond monomial parametrizations} 

If $\dep=0$ and $\dim P>0$, 
then the solution set still has an explicit parametrization, 
by~Corollary~\ref{cor:d0}.
Indeed, it is the component-wise product of 
powers of the coefficient set divided by the parameters
and 
the exponentiation of the orthogonal complement of the monomial difference subspace.

Strictly speaking, such sets are not solutions to binomial equations.
However, 
they can be seen as solutions to binomial equations that are parametrized by the coefficient set $P$. 
In this view, the vector $y$ in Equation~\eqref{eq:beyond} is not fixed, but varies over all $y \in P$.

For an illustration, see Example~\ref{exa:two-component} in Section~\ref{sec:app}.


\subsection{Decomposable systems} \label{sec:decomp}

Recall that we consider partitions of the column indices (and hence of the monomials) into $\ell \ge 1$ classes
such that $C$ (and hence $P$) are direct products.
The system
\begin{equation*}
\left(c \circ x^B\right) \in C
\end{equation*}
with
\begin{equation*}
C = C_1 \times \cdots \times C_\ell , \quad
B = \begin{pmatrix} B_1 & \ldots & B_\ell \end{pmatrix} , \quad \text{and} \quad
c = \begin{pmatrix} c^1 \\ \vdots \\ c^\ell \end{pmatrix} 
\end{equation*}
is determined by the subsystems
\begin{equation*}
\left(c^i \circ x^{B_i}\right) \in C_i , \quad i=1,\ldots,\ell ,
\end{equation*}
with solution sets
\begin{equation}
Z_{c,i} = \{ x \in \R^n_> \mid (c^i \circ x^{B_i}) \in C_i \} , \quad i=1,\ldots,\ell .
\end{equation}
By construction, we have the following result.
\begin{fac}
Let $Z_c = \{ x \in \R^n_> \mid (c \circ x^B) \in C \}$.
Then,
\[
Z_c = Z_{c,1} \cap \cdots \cap Z_{c,\ell} .
\]
\end{fac}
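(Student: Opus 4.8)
The plan is to unfold the definitions and reduce the claim to the elementary fact that a vector lies in a direct product of sets precisely when each of its blocks lies in the corresponding factor. First I would fix $x \in \R^n_>$ and write the vector of monomial terms in block form according to the partition of $\{1,\ldots,m\}$ into the $\ell$ classes. Since $B = (B_1 \mid \cdots \mid B_\ell)$ with $B_i \in \R^{n \times m_i}$, the vector of monomials $x^B \in \R^m_>$ stacks as the blocks $x^{B_1}, \ldots, x^{B_\ell}$ with $x^{B_i} \in \R^{m_i}_>$; and since $c$ has blocks $c^1, \ldots, c^\ell$ with $c^i \in \R^{m_i}_>$, the componentwise product $c \circ x^B$ stacks as the blocks $c^i \circ x^{B_i} \in \R^{m_i}_>$, $i = 1, \ldots, \ell$.

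Next I would use the hypothesis $C = C_1 \times \cdots \times C_\ell$: for any $y \in \R^m$ with blocks $y^i \in \R^{m_i}$ we have $y \in C$ if and only if $y^i \in C_i$ for every $i$. Applying this to $y = c \circ x^B$ shows that $(c \circ x^B) \in C$ holds if and only if $(c^i \circ x^{B_i}) \in C_i$ holds for all $i = 1, \ldots, \ell$, which by the definition of $Z_{c,i}$ is exactly the statement that $x \in Z_{c,i}$ for all $i$. Since $x \in \R^n_>$ was arbitrary, this gives $Z_c = Z_{c,1} \cap \cdots \cap Z_{c,\ell}$.

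I do not expect a genuine obstacle here: the statement is a bookkeeping identity that is essentially forced by the way $B$, $c$, and the classes were introduced in Section~\ref{sec:help}. The only point deserving a line of care is the verification that the block structure of $c \circ x^B$ matches the partition defining the product $C = C_1 \times \cdots \times C_\ell$ — but this compatibility is built into the setup, so it is immediate. Accordingly the proof can be written in one or two sentences; I would still spell out the two equivalences above for readability.
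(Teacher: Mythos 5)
Your proof is correct and is exactly the argument the paper has in mind: the paper states this Fact with only the remark ``By construction,'' i.e., it treats the blockwise decomposition of $c \circ x^B$ and the product structure of $C$ as immediately yielding the identity. Your spelled-out version of that bookkeeping is the intended (and only reasonable) proof.
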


The subsystems are not independent, but linked by $x \in \R^n_>$,
which is reflected by the solution set on the coefficient set.
Let 
\begin{equation} 
D_i = \ker \mBp_i 
\end{equation} 
and 
\begin{equation} 
Y_{c,i} = \{ y \in P_i \mid y^z = (c^i)^z \text{ for all } z \in D_i \}, \quad i=1,\ldots,\ell,
\end{equation}
be the solution sets on the coefficient sets of the subsystems.
Clearly, $D_1 \times \cdots \times D_\ell \subseteq D$.
By construction, we have the following result.
\begin{fac}
Let $Y_c = \{ y \in P \mid y^z = c^z \text{ for all } z \in D \}$.
Then,
\[
Y_c \subseteq Y_{c,1} \times \cdots \times Y_{c,\ell} .
\]
\end{fac}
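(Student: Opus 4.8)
The plan is to verify the inclusion $Y_c \subseteq Y_{c,1} \times \cdots \times Y_{c,\ell}$ directly from the definitions, using the block structure of all the objects involved. First I would fix $y \in Y_c$ and write $y = (y^1, \ldots, y^\ell)^\trans$ with $y^i \in \R^{m_i}_>$, following the block notation already in place. Since $y \in P = P_1 \times \cdots \times P_\ell$, we immediately get $y^i \in P_i$ for each $i$, so the membership in the correct bounded set is automatic. It remains to check the binomial equations $ (y^i)^z = (c^i)^z$ for all $z \in D_i$.

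The key observation is that $D_i = \ker \mBp_i$ embeds into $D = \ker \mBp$ as the $i$-th block: a vector $z \in \R^{m_i}$ lies in $D_i$ precisely when $B_i z = 0$ and $\ones_{m_i}^\trans z = 0$, and the vector $\hat z \in \R^m$ obtained by padding $z$ with zeros in all other blocks then satisfies $B \hat z = B_i z = 0$ and $\JL \hat z = 0$ (since the only nonzero block of $\JL \hat z$ is $\ones_{m_i}^\trans z = 0$), hence $\hat z \in \ker \mBp = D$. This is exactly the inclusion $D_1 \times \cdots \times D_\ell \subseteq D$ already noted in the text. Now for $z \in D_i$, apply the defining condition of $Y_c$ to the padded vector $\hat z \in D$: since $y^{\hat z} = \prod_{k=1}^{\ell} (y^k)^{(\hat z)^k} = (y^i)^z$ (all other exponent blocks being zero) and similarly $c^{\hat z} = (c^i)^z$, the equation $y^{\hat z} = c^{\hat z}$ reduces to $(y^i)^z = (c^i)^z$. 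Thus $y^i \in Y_{c,i}$, and since this holds for every $i$, we conclude $y \in Y_{c,1} \times \cdots \times Y_{c,\ell}$.

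There is essentially no hard part here: the statement is, as the paper says, ``by construction''. The only point requiring a moment's care is the translation between a vector $z \in D_i \subseteq \R^{m_i}$ and its zero-padded image in $\R^m$, and checking that the generalized monomial $y^{(\cdot)}$ restricted to that image only sees the $i$-th block of $y$ — which is immediate since entries raised to the power zero contribute a factor of one. One could alternatively phrase the whole argument through the matrix $\mH$ of Lemma~\ref{lem:Gale}, observing that $\mH$ has a block-diagonal-like structure inherited from $\IL$ and $\mG$, but the direct ``pad with zeros'' argument is cleaner and avoids choosing bases. I would also remark explicitly that the inclusion is generally strict, since $D$ can contain vectors mixing several blocks (affine dependencies \emph{between} classes), which is precisely the phenomenon that makes the subsystems ``linked by $x \in \R^n_>$'' as stated after the Fact.
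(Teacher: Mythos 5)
Your proof is correct and follows exactly the route the paper intends when it says ``by construction'': the paper gives no explicit argument, and your zero-padding of $z \in D_i$ into $D$ via the noted inclusion $D_1 \times \cdots \times D_\ell \subseteq D$, combined with the product structure $P = P_1 \times \cdots \times P_\ell$, is the natural (and essentially only) way to fill in the details. Your closing remark that the inclusion is generally strict, due to dependencies between classes, is also consistent with the paper's discussion of decomposability.
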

In particular, $Y_c \neq \emptyset$ implies $Y_{c,i} \neq \emptyset$, $i=1,\ldots,\ell$,
but not vice versa.

Hence, we call a system $(c \circ x^B) \in C$ {\em decomposable} 
if $D$ 
is a direct product with the same classes as $C$,
that is, $D = D_1 \times \cdots \times D_\ell$. 
(Trivially, a system with $\ell=1$ is ``decomposable''.)
Let $\dep_i = \dim D_i$ denote the monomial dependency of the subsystem ${(c^i \circ x^{B_i}) \in C_i}$.
By definition, we have the following result.
\begin{fac}
A system $(c \circ x^B) \in C$ is decomposable if and only if $\dep = \dep_1 + \ldots + \dep_\ell$.
\end{fac}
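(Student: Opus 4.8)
The plan is to treat this as a purely dimension-theoretic statement about nested subspaces. First I would recall that the inclusion $D_1 \times \cdots \times D_\ell \subseteq D$ holds for \emph{every} system, as recorded in the Fact immediately preceding this one (a consequence of $D_i = \ker \mBp_i$ together with the block structure of $\mBp = \begin{pmatrix} B \\ \JL \end{pmatrix}$ relative to the classes). In particular $D_i \subseteq \R^{m_i}$, so $D_1 \times \cdots \times D_\ell$ is a genuine linear subspace of $\R^{m_1} \times \cdots \times \R^{m_\ell} = \R^m$, and it is contained in $D$. Decomposability is, by definition, the assertion that this inclusion is an equality.

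Next I would compare dimensions of the two sides. On the left, the dimension of a direct product of vector spaces is the sum of the dimensions of the factors, so $\dim(D_1 \times \cdots \times D_\ell) = \dep_1 + \cdots + \dep_\ell$. On the right, $\dim D = \dep$ by definition of the monomial dependency (equivalently, by Lemma~\ref{lem:Gale}). Finally, for a linear subspace $U$ of a finite-dimensional vector space $W$ with $U \subseteq W$, one has $U = W$ if and only if $\dim U = \dim W$. Applying this with $U = D_1 \times \cdots \times D_\ell$ and $W = D$ gives $D = D_1 \times \cdots \times D_\ell$ if and only if $\dep = \dep_1 + \cdots + \dep_\ell$, which is exactly the claim.

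I do not foresee any real obstacle here: the argument uses only the already-established inclusion $D_1 \times \cdots \times D_\ell \subseteq D$, the additivity of dimension under direct products, and the elementary fact that a subspace of a finite-dimensional space is the whole space precisely when the dimensions coincide. The single point that warrants a moment's care is the bookkeeping of ambient spaces, namely that the factors $D_i$ live in $\R^{m_i}$ so that their product sits inside $\R^m$ and can be compared with $D = \ker \mBp \subseteq \R^m$; this is precisely what the preceding Fact guarantees.
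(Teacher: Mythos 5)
Your proposal is correct and coincides with the paper's (implicit) argument: the paper dismisses this Fact with ``By definition,'' and your dimension count applied to the always-valid inclusion $D_1 \times \cdots \times D_\ell \subseteq D$ is exactly the intended justification. One bookkeeping remark: that inclusion is recorded in the running text just before the preceding Fact (``Clearly, $D_1 \times \cdots \times D_\ell \subseteq D$''), not in the preceding Fact itself, which concerns $Y_c \subseteq Y_{c,1} \times \cdots \times Y_{c,\ell}$.
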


Most importantly,
the solution set on the coefficient set is a direct product
if and only if the system is decomposable.
We provide a formal statement.
\begin{pro}
Let $Y_c$ be the solution set on the coefficient set
and $Y_{c,i}$, $i=1,\ldots,\ell$, be the solution sets on the coefficient sets of the subsystems.
Then,
\[
Y_c = Y_{c,1} \times \cdots \times Y_{c,\ell}
\quad \text{if and only if} \quad
D = D_1 \times \cdots \times D_\ell .
\]
\end{pro}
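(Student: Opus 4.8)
The plan is to pass to additive (logarithmic) coordinates, where both sides of the asserted identity become the componentwise exponential of the intersection of the \emph{fixed} set $\ln P$ with an affine subspace whose direction is either $D^\perp$ or $(D_1\times\cdots\times D_\ell)^\perp$. First I would record that, for $y\in P$ and $z\in\R^m$, one has $y^z=c^z$ if and only if $z\cdot(\ln y-\ln c)=0$; hence
\[
Y_c=\e^{(\ln P)\cap(\ln c+D^\perp)}
\qquad\text{and}\qquad
Y_{c,i}=\e^{(\ln P_i)\cap(\ln c^i+D_i^\perp)}\ \ (i=1,\ldots,\ell).
\]
Next, using $P=P_1\times\cdots\times P_\ell$ (so $\ln P=\ln P_1\times\cdots\times\ln P_\ell$) together with the elementary identity $(D_1\times\cdots\times D_\ell)^\perp=D_1^\perp\times\cdots\times D_\ell^\perp$ in the orthogonal direct sum $\R^m=\R^{m_1}\oplus\cdots\oplus\R^{m_\ell}$, I would obtain
\[
Y_{c,1}\times\cdots\times Y_{c,\ell}=\e^{(\ln P)\cap\big(\ln c+(D_1\times\cdots\times D_\ell)^\perp\big)} .
\]
Since $\e^{(\cdot)}\colon\R^m\to\R^m_>$ is a bijection, comparing $Y_c$ with $Y_{c,1}\times\cdots\times Y_{c,\ell}$ reduces to comparing how the two affine subspaces $\ln c+D^\perp$ and $\ln c+(D_1\times\cdots\times D_\ell)^\perp$ meet $\ln P$; and since $D_1\times\cdots\times D_\ell\subseteq D$ always, we have $D^\perp\subseteq(D_1\times\cdots\times D_\ell)^\perp$, which re-proves the inclusion $Y_c\subseteq Y_{c,1}\times\cdots\times Y_{c,\ell}$.

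The implication ``$\Leftarrow$'' is then immediate: if $D=D_1\times\cdots\times D_\ell$ the two affine subspaces coincide, so $Y_c=Y_{c,1}\times\cdots\times Y_{c,\ell}$ for every $c$. For ``$\Rightarrow$'' I would argue by contraposition and exhibit a witnessing parameter vector: if $D_1\times\cdots\times D_\ell\subsetneq D$, then $D^\perp\subsetneq(D_1\times\cdots\times D_\ell)^\perp$, so I can choose $w\in(D_1\times\cdots\times D_\ell)^\perp\setminus D^\perp$; since $C\neq\emptyset$ we also have $P\neq\emptyset$, so I fix any $p_0\in\ln P$ and set $c:=\e^{p_0-w}\in\R^m_>$. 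Then $p_0-\ln c=w\in(D_1\times\cdots\times D_\ell)^\perp$ yields $\e^{p_0}\in Y_{c,1}\times\cdots\times Y_{c,\ell}$, while $p_0-\ln c=w\notin D^\perp$ yields $\e^{p_0}\notin Y_c$; hence $Y_c\subsetneq Y_{c,1}\times\cdots\times Y_{c,\ell}$ for this $c$. Read as in the Facts preceding the proposition (i.e., with the product identity understood to hold for all $c$), this establishes the equivalence; a ``for all $c$'' reading is genuinely needed, since for a degenerate $c$ with some $Y_{c,i}=\emptyset$ both sides are empty irrespective of decomposability.

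I do not expect a serious obstacle. The only steps requiring care are the two structural identities $\ln P=\prod_i\ln P_i$ and $(\prod_i D_i)^\perp=\prod_i D_i^\perp$ — these are precisely what transfers the product structure of $C$, and the (non-)product structure of $D$, to the (non-)product structure of $Y_c$ — and the choice of the single parameter $c=\e^{p_0-w}$ that breaks the product form in the non-decomposable case.
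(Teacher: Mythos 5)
Your argument is correct and complete; note that the paper states this proposition without giving a proof, so there is nothing to compare it against line by line. The log-linear reformulation $Y_c=\e^{(\ln P)\cap(\ln c+D^\perp)}$ together with $\ln P=\prod_i\ln P_i$ and $(\prod_i D_i)^\perp=\prod_i D_i^\perp$ reduces everything to comparing the affine subspaces $\ln c+D^\perp$ and $\ln c+(\prod_i D_i)^\perp$ inside $\ln P$, which is exactly the mechanism underlying the proof of Theorem~\ref{thm:main}, and your witness $c=\e^{p_0-w}$ settles the only-if direction. Your observation that the only-if direction genuinely requires the ``for all $c$'' reading (since for a fixed $c$ both sides can be empty, or can coincide accidentally, even when $D\supsetneq D_1\times\cdots\times D_\ell$) is an accurate and worthwhile clarification of how the statement must be quantified.
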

In particular, if $\dep = \dep_1 + \ldots + \dep_\ell$, then
$Y_c \neq \emptyset$ if and only if $Y_{c,i} \neq \emptyset$, $i=1,\ldots,\ell$.
Given parametrizations of $Y_{c,i}$ 
and hence of $Y_c$,
our main result, Theorem~\ref{thm:main}, provides a parametrization of the solution set $Z_c$.

\begin{rem}
In the special case of polynomial {\em equations} arising from reaction networks,
a decomposition into subnetworks is called ``independent''
if the stoichiometric subspace is a direct sum of the subnetwork subspaces~\cite{Feinberg1987}.
However,
as discussed above, such subnetworks are truly independent
only if the network is decomposable.
To be more explicit, let $N \in \R^{n \times r}$ be the stoichiometric matrix of a reaction network.
An ``independent'' decomposition of $\im N$ as a direct sum
corresponds to a decomposition of $\ker N$ (and hence of the coefficient cone ${C = \ker N \cap \R^r_>}$) as a direct product.
In our approach,
we always assume that $C$ is a direct product,
but this does not imply that the system is decomposable.

In reaction network theory, 
``independent'' decompositions
have been used in the deficiency-one theorem~\cite{Feinberg1987,Boros2012}.
Additionally, it is assumed that the classes are determined by the connected components (``linkage classes'') of the underlying network.
However, in general, classes are not determined by the connected components,
but by the ``independent'' decompositions of the stoichiometric matrix.
For a corresponding algorithm, see Hernandez et al.~\cite{Hernandez2021}. 
In their recent work~\cite{Hernandez2023}, they study two examples and parametrize positive equilibria 
via ``independent'' decomposition, network translation~\cite{Johnston2014,TonelloJohnston2018}, 
and the theory of generalized mass-action systems~\cite{MuellerRegensburger2012,MuellerRegensburger2014,MuellerHofbauerRegensburger2019,CraciunMuellerPanteaYu2019,JohnstonMuellerPantea2019}.
In our approach, these examples have monomial dependency $\dep=0$
and hence an explicit parametrization. 
In future work, we will revisit and extend the deficiency-one theorem. 
\end{rem}



\section{Examples} \label{sec:app}

We demonstrate our results in three examples,
all of which involve trinomials.
We choose them to cover several ``dimensions'' of the problem.
In particular, we consider examples 
\begin{itemize}
\item 
with monomial dependency $\dep=0$, $\dep=1$, or $\dep = 2$, 
\item
with $\ell=1$ or $\ell=2$ classes,
\item
with or without inequalities,
\item
with finitely or infinitely many solutions,  
and
\item
from real fewnomial or reaction network theory.
\end{itemize}

Ordered by the dependency,
we study the following parametrized systems of (generalized) polynomial inequalities:
\begin{itemize}
\item $\dep=0$:

Example~\ref{exa:two-component}: 
{\bf Two overlapping trinomials involving four monomials in four variables}
(from a reaction network model of a two-component regulatory system in the field of molecular biology)

We determine an explicit parametrization of the infinitely many solutions.
\item $\dep=1$:

Example~\ref{exa:tri}: 
{\bf One trinomial in one variable} (univariate case)

We derive Descartes'/Laguerre's rule of signs
and provide a ``solution formula'' involving discriminants
and ``roots'' (as for a quadratic equation).
\item $\dep = 2$:

Example~\ref{exa:multi}: 
{\bf One trinomial equation and one tetranomial inequality in five variables}
(from a reaction network and the study of its region for multistationarity)

We determine an explicit parametrization of the infinitely many solutions
(and hence of the region for multistationarity).
\end{itemize}

For more examples from real fewnomial theory, see the parallel work~\cite{MuellerRegensburger2023b}.


\subsection{Dependency zero} 

\begin{exa} \label{exa:two-component}
We consider {\bf two overlapping trinomials involving four monomials in four variables} $x,x_p,y,y_p$, namely
\begin{alignat*}{3}
-k_1 \, x &+ k_2 \, x_p y - k_3 \, x y_p &&= 0 , \\
& - k_2 \, x_p y + k_3 \, x y_p + k_4 \, y_p &&=0 , 
\end{alignat*}
which arise from a model of a two-component regulatory system\footnote{
A model of a two-component signaling system is given by the chemical reaction network
$\ce{X} \to \ce{X}_p , \; \ce{X}_p + \ce{Y} \rightleftarrows \ce{X} + \ce{Y}_p , \; \ce{Y}_p \to \ce{Y}$.
A histidine kinase~$\ce{X}$ auto-phosphorylates and transfers the phosphate group to a response regulator~$\ce{Y}$ 
which auto-dephosphorylates, see~e.g.~\cite{Conradi2017}.
\par
Under the assumption of mass-action kinetics,
the associated dynamical system (for the concentrations of the chemical species) is given by
\begin{alignat*}{3}
\dd{x}{t} &= -k_1 \, x && + k_2 \, x_p y - k_3 \, x y_p , \\
\dd{y}{t} &= && - k_2 \, x_p y + k_3 \, x y_p + k_4 \, y_p , 
\end{alignat*}
and the conservation laws $x + x_p = \bar x$ and $y + y_p = \bar y$.
\par
The network has 6~vertices (``complexes''), 3~connected components (``linkage classes''), 2~(dynamically) independent variables,
and hence ``deficiency'' $\delta=6-3-2=1$.
\par
The corresponding polynomial equation system has 4~monomials, 1~class, 3~(``monomially'') independent variables, and hence dependency $\dep=4-1-3=0$.
}
in the field of molecular biology.

Equivalently, $A \, (c\circ x^B) = 0$ with
\[
A = \begin{pmatrix} -1 & 1 & -1 & 0 \\ 0 & -1 & 1 & 1 \end{pmatrix} , \quad
B = \bordermatrix{& \gray{1} & \gray{2} & \gray{3} & \gray{4} \cr \gray{x} & 1 & 0 & 1 & 0 \cr \gray{x_p} & 0 & 1 & 0 & 0 \cr \gray{y} & 0 & 1 & 0 & 0 \cr \gray{y_p} & 0 & 0 & 1 & 1} ,
\quad
c = \begin{pmatrix} k_1 \\ k_2 \\ k_3 \\ k_4 \end{pmatrix} .
\]

In terms of inequalities, 
$(c\circ x^B) \in C$ with coefficient cone $C = \ker A \cap \R^4_>$.
Clearly, $m=n=4$ and $\ell=1$. 

Crucial coefficient data are
\begin{gather*}
\ker A = \im \begin{pmatrix} 0 & 1 \\ 1 & 1 \\ 1 & 0 \\ 0 & 1 \end{pmatrix} , \quad
C = \{
\lambda \begin{pmatrix} 0 \\ 1 \\ 1 \\ 0 \end{pmatrix} + \mu \begin{pmatrix} 1 \\ 1 \\ 0 \\ 1 \end{pmatrix}
\mid \lambda,\mu > 0 \} , \\
\text{and} \quad P = C \cap \Delta_3 = \{ 
\begin{pmatrix} 1-\lambda \\ 1 \\ \lambda \\ 1-\lambda \end{pmatrix}
\mid \lambda \in (0,1) \} ,
\end{gather*}
where we use the (non-standard) simplex $\Delta_3 = \{ y \in \R^4_\ge \mid (1,1,2,1) \, y = 3 \}$.

Crucial exponent data are
\[
M = B \, \IL_4 = \begin{pmatrix} 1 & 0 & 1 \\ 0 & 1 & 0 \\ 0 & 1 & 0 \\ -1 & -1 & 0 \end{pmatrix}
\quad \text{and} \quad
\LL^\perp = \ker M^\trans  
= \im \begin{pmatrix} 0 \\ 1 \\ -1 \\ 0 \end{pmatrix} .
\]
In particular, $\dep = \dim (\ker M) = 0$ and $\dim \LL = \dim (\im M) = 3$.
Alternatively, $\dep = m - \ell - \dim \LL = 4 - 1 - 3 = 0$, by Proposition~\ref{pro:dep}.

It remains to determine a generalized inverse of~$M$ and the resulting exponentiation matrix~$E$.
Indeed, we choose
\[
M^* = \begin{pmatrix} 0 & -1 & 0 & -1 \\ 0 & 1 & 0 & 0 \\ 1 & 1 & 0 & 1 \end{pmatrix}
\quad \text{and} \quad
E = \IL_4 \, M^* = \begin{pmatrix} 0 & -1 & 0 & -1 \\ 0 & 1 & 0 & 0 \\ 1 & 1 & 0 & 1 \\ -1 & -1 & 0 & 0 \end{pmatrix}.
\]
By Theorem~\ref{thm:main} (or Corollary~\ref{cor:d0}), we have an explicit parametrization of
the solution set,
\begin{align*}
Z_c &= \{ (y \circ c^{-1})^E  \mid y \in P \} \circ \e^{\LL^\perp} \\
&= \{ \begin{pmatrix} 
(\frac{\lambda}{k_3})^1 (\frac{1-\lambda}{k_4})^{-1} \\ 
(\frac{1-\lambda}{k_1})^{-1} (\frac{1}{k_2})^1 (\frac{\lambda}{k_3})^1 (\frac{1-\lambda}{k_4})^{-1} \\ 
1 \\ 
(\frac{1-\lambda}{k_1})^{-1} (\frac{\lambda}{k_3})^1 
\end{pmatrix} 
\mid \lambda \in (0,1) \} 
\circ
\exp \, ( \, \im \begin{pmatrix} 0 \\ 1 \\ -1 \\ 0 \end{pmatrix} ) \\
&= \{ \begin{pmatrix} \frac{k_4}{k_3} \frac{\lambda}{1-\lambda} \\ \frac{k_1 k_4}{k_2 k_3} \frac{\lambda}{(1-\lambda)^2} \\ 1 \\ \frac{k_1}{k_3} \frac{\lambda}{1-\lambda} \end{pmatrix} 
\circ
\begin{pmatrix} 0 \\ \tau \\ \frac{1}{\tau} \\ 0 \end{pmatrix} \mid \lambda \in (0,1), \, \tau > 0 \} .
\end{align*}
The parametrization involves the original parameters $c=(k_1,k_2,k_3,k_4)^\trans$, the parametrization of the coefficient set involving $\lambda \in (0,1)$,
and the exponential/monomial parametrization involving $\tau > 0$.

\begin{rem}
An equivalent parametrization (involving $\sigma = k_3 \frac{1-\lambda}{\lambda} \in \R_>$ instead of $\lambda \in (0,1)$) has been obtained 
in \cite[Example~18]{JohnstonMuellerPantea2019},
using network translation~\cite{Johnston2014,TonelloJohnston2018} and the theory of generalized mass-action systems~\cite{MuellerRegensburger2012,MuellerRegensburger2014,MuellerHofbauerRegensburger2019,CraciunMuellerPanteaYu2019}.
However, unlike $\lambda$, the parameter $\sigma$ lacks geometric meaning.
It is also unclear under what conditions network translation leads to a generalized mass-action system with zero (effective and kinetic-order) ``deficiencies''.
In our approach, the dependency can be easily computed,
and $\dep=0$ immediately implies the existence of an explicit parametrization.
\end{rem}
\end{exa}


\subsection{Dependency one}

\begin{exa} \label{exa:tri}
We consider {\bf one trinomial in one variable} (a univariate trinomial) in the form
\[
c_1 \, x^{b_1} + c_2 \, x^{b_2} - 1 = 0
\]
with $b_1,b_2\neq0$, $b_1 \neq b_2$, and $c_1,c_2>0$.
That is, we fix the signs of the terms, but do not assume an order on the exponents $b_1,b_2,0$.
To derive Laguerre's rule of signs, we have to consider all possible orders.
If $b_1 \cdot b_2>0$, then the terms have sign pattern $++-$ (if $b_1,b_2>0$) or $-++$ (if $b_1,b_2<0$),
and there is one sign change.
If $b_1 \cdot b_2<0$, then the terms have sign pattern $+-+$,
and there are two sign changes.

Equivalently, $A \, (c\circ x^B)=0$ with
\[
A = \begin{pmatrix} 1 & 1 & -1 \end{pmatrix} , \quad
B = \begin{pmatrix} b_1 & b_2 & 0 \end{pmatrix} ,
\quad
c = \begin{pmatrix} c_1 \\ c_2 \\ 1 \end{pmatrix} .
\]
In terms of inequalities, 
$(c\circ x^B) \in C$ with coefficient cone $C = \ker A \cap \R^3_>$.
Clearly, $m=3$ and $n=\ell=1$. 

Crucial coefficient data are
\begin{gather*}
\ker A = \im \begin{pmatrix} 1 & 0 \\ 0 & 1 \\ 1 & 1 \end{pmatrix} , \quad
C = \{
\lambda \begin{pmatrix} 1 \\ 0 \\ 1 \end{pmatrix} + \mu \begin{pmatrix} 0 \\ 1 \\ 1 \end{pmatrix}
\mid \lambda,\mu \in \R_> \} , \\
\text{and} \quad P = C \cap \Delta_2 = \{ 
\begin{pmatrix} \lambda \\ 1-\lambda \\ 1 \end{pmatrix}
\mid \lambda \in (0,1) \} ,
\end{gather*}
where we use the (non-standard) simplex $\Delta_2 = \{ y \in \R^3_\ge \mid \ones_3 \cdot y = 2 \}$.

Crucial exponent data are
\[
\mBp = \begin{pmatrix} B \\ \ones_3^\trans \end{pmatrix} = \begin{pmatrix} b_1 & b_2 & 0 \\ 1 & 1 & 1 \end{pmatrix} 
\quad\text{and}\quad
D = \ker \mBp = \im z
\]
with
\[
z = \begin{pmatrix} 1 \\ -\frac{b_1}{b_2} \\ \frac{b_1}{b_2}-1 \end{pmatrix} 
= \begin{pmatrix} 1 \\ -b \\ b-1 \end{pmatrix} ,
\quad\text{where}\quad
b=\frac{b_1}{b_2} .
\]
In particular, $\dep = \dim D = 1$.

Now, we can apply our main result, Theorem~\ref{thm:main}.
Most importantly, we determine the solution set on the coefficient set,
that is, the set of all $y \in P$ such that  
%
\begin{equation*}
y^z = c^z ,
\quad \text{that is,} \quad
\lambda^{1} (1-\lambda)^{-b} = c_1^{1} c_2^{-b} =: c^* .
\end{equation*}
Using the {\em sign-characteristic function} $s_{1,-b}$,
we write
\begin{equation} \label{eq:sol}
s_{1,-b}(\lambda) = c^* .
\end{equation}
(For the definition of sign-characteristic functions, see Appendix~\ref{app:sc}.)
%
\begin{itemize}
\item
If $b_1 \cdot b_2>0$ (that is, $b>0$), then $s_{1,-b}$ is strictly monotonically increasing,
and there is exactly one solution to condition~\eqref{eq:sol},
\[
\lambda = r_{1,-b} \left( c^* \right) ,
\]
for every $c^*$ and hence for all parameters $c$.
(Here, $r_{1,-b}$ is the {\em root} of the sign-characteristic function $s_{1,-b}$, see Appendix~\ref{app:sc}.)

This corresponds to the sign patterns $++-$ or $-++$, that is, to one sign change.
\item
If $b_1 \cdot b_2<0$ (that is, $b<0$), then $s_{1,-b}$ has a maximum,
and there are zero, one, or two distinct solutions,
depending on whether the maximum is less, equal, or greater than the right-hand side,
\[
s_{1,-b}^{\max} \lesseqqgtr c^* .
\]
Equality occurs when the solution equals the maximum point. Then, the solution is of order two.
Hence, counted with multiplicity, there are zero or two solutions.

This corresponds to the sign pattern $+-+$, that is, to two sign changes.

Explicitly, if the {\em discriminant} $\mathcal{D}$ is nonnegative,
\[
\mathcal{D} := s_{1,-b}^{\max} - c^*
= \frac{(-b)^{-b}}{(1-b)^{1-b}}
- c_1^{1} c_2^{-b} 
\ge 0 ,
\]
then the two solutions are determined by the roots
\[
\lambda^- = r_{1,-b}^- \left( c^* \right)
\quad\text{and}\quad
\lambda^+ = r_{1,-b}^+ \left( c^* \right) .
\]
That is, we can treat the general trinomial like a quadratic (the paradigmatic trinomial).
If the discriminant $\mathcal{D}$ is negative, then there is no solution,
see also \cite[Proposition~2]{AlbouyFu2014}.
\end{itemize}
Altogether, we have shown Laguerre's rule of signs for trinomials, in our approach.
Moreover, using discriminants and roots, we have provided a solution formula on the coefficient set.

By Theorem~\ref{thm:main}, we obtain 
the solution set in the original variable $x$,
via exponentiation.
In particular,
we use the matrices $\IL_3$, $M = B \, \IL_3$, $M^*$, and $E = \IL_3 \, M^*$.
\end{exa}


\subsection{Dependency two}

\begin{exa} \label{exa:multi}
We consider {\bf one trinomial equation and one tetranomial {\em inequality} in five variables} $x=(x_1,x_2)^\trans$ and $k = (k_1,k_2,k_3)^\trans$,
\begin{alignat*}{3}
k_3 \, x_1^2 x_2 - k_1 \, x_1+ k_2 \, x_2 \phantom{+} & && = 0 , \\
& k_3 \, x_1^2 - 2 k_3 \, x_1 x_2 + k_1 + k_2 && < 0 , 
\end{alignat*}
which arise from the study of a reaction network,\footnote{
The reaction network $\ce{X_1} \rightleftarrows \ce{X_2}$, $2 \ce{X_1} + \ce{X_2} \to 3\ce{X_1}$ is the running example in~\cite{FeliuTelek2023}.
Under the assumption of mass-action kinetics,
the associated dynamical system (for the concentrations of the species $\ce{X_1}, \ce{X_2}$) is given by
\[
\dd{x_1}{t} = f_k(x) = k_3 \, x_1^2 x_2 - k_1 \, x_1+ k_2 \, x_2
\]
and the conservation law $x_1+x_2 = \bar x$, that is, $w \, x = \bar x$ with $w = (1,1)$.

Let $J$ be the Jacobian matrix of the dynamical system for $(x_1,x_2)$
and $M$ be obtained from~$-J$ by replacing the second (dependent) row by $w$.
Then, 
\[
\det M = k_3 \, x_1^2 - 2 k_3 \, x_1 x_2 + k_1 + k_2 .
\]
Now,  let $\mathcal Z_k = \{ x \mid f_k(x) = 0 \}$
and $\mathcal P_{\bar x} = \{ x \mid w \, x = \bar x \}$.
By~\cite[Theorem~1]{Conradi2017},
if $\det M < 0$ for some $x \in \mathcal Z_k \cap \mathcal P_{\bar x}$,
then the parameter pair $(k,\bar x)$ enables multistationarity, that is, $|\mathcal Z_k \cap \mathcal P_{\bar x}| > 1$.
(And $f_k=0$, $\det M < 0$ define the system in the main text.)
If $\det M = 0$, multistationarity {\em may} occur. For simplicity, we do not consider this ``boundary case''.
}
in particular, of its region for multistationarity, see~\cite[Figure~3]{FeliuTelek2023}.

In our notation,
for
\[
\xi = \begin{pmatrix} x \\ k \end{pmatrix}  = \begin{pmatrix} x_1 \\ x_2 \\ k_1 \\ k_2 \\ k_3 \end{pmatrix} \in \R^5_> ,
\]
we have the equation
$A^e \, (c^e \circ \xi^{B^e})=0$ with
\[
A^e = \begin{pmatrix} 1 & -1 & 1 \end{pmatrix} , \quad
B^e = \bordermatrix{ & \gray{1} & \gray{2} & \gray{3} \cr \gray{x_1} & 2 & 1 & 0 \cr \gray{x_2} & 1 & 0 & 1 \cr \gray{k_1} & 0 & 1 & 0 \cr \gray{k_2} & 0 & 0 & 1 \cr \gray{k_3} & 1 & 0 & 0} ,
\quad
c^e = \begin{pmatrix} 1 \\ 1 \\ 1 \end{pmatrix}
\]
and the inequality
$A^i \, (c^i \circ \xi^{B^i}) < 0$ with
\[
A^i = \begin{pmatrix} 1 & -1 & 1 & 1 \end{pmatrix} , \quad
B^i = \bordermatrix{ & \gray{4} & \gray{5} & \gray{6} & \gray{7} \cr \gray{x_1} & 2 & 1 & 0 & 0  \cr \gray{x_2} & 0 & 1 & 0 & 0  \cr \gray{k_1} & 0 & 0 & 1 & 0 \cr \gray{k_2} & 0 & 0 & 0 & 1 \cr \gray{k_3} & 1 & 1 & 0 & 0} ,
\quad
c^i = \begin{pmatrix} 1 \\ 2 \\ 1 \\ 1 \end{pmatrix} ,
\]
where we order monomials first by total degree and then by variable name.

In terms of inequalities, 
we have $(c\circ \xi^B) \in C$,
where the coefficient cone 
\begin{gather*}
C = C^e \times C^i
\intertext{is given by}
C^e = \ker A^e \cap \R^3_>
\quad\text{and}\quad
C^i = \{ y \mid A^i y < 0 \} \cap \R^4_> .
\end{gather*}
Clearly, $n=5$, $\ell=2$, and $m=3+4=7$. 

Explicitly,
coefficient data are
\begin{gather*}
C^e = \{
\lambda_1 \begin{pmatrix} 1 \\ 1 \\ 0 \end{pmatrix} + \lambda_2 \begin{pmatrix} 0 \\ 1 \\ 1 \end{pmatrix}
\mid \lambda_i > 0 \} , \\
C^i = \{
\mu_1 \begin{pmatrix} 1 \\ 1 \\ 0 \\ 0 \end{pmatrix} + \mu_2 \begin{pmatrix} 0 \\ 1 \\ 1 \\ 0 \end{pmatrix} + \mu_3 \begin{pmatrix} 0 \\ 1 \\ 0 \\ 1 \end{pmatrix}
+ \mu_4 \begin{pmatrix} 0 \\ 1 \\ 0 \\ 0 \end{pmatrix} 
\mid \mu_i > 0 \} , 
\end{gather*}
and the resulting coefficient set
\begin{gather*}
P=P^e\times P^i
\intertext{is given by}
P^e = C^e \cap \Delta_2 = \{ 
\begin{pmatrix} \lambda_1 \\ 1 \\ 1-\lambda_1 \end{pmatrix}
\mid 0 < \lambda_1 < 1 \} , \\
P^i = C^i \cap \tilde \Delta_3 = \{ 
\begin{pmatrix} \mu_1 \\ 1 \\ \mu_2 \\ \mu_3 \end{pmatrix}
\mid 0 < \mu_i \text{ and } \sum_{i} \mu_i < 1 \} ,
\end{gather*}
where we use the (non-standard) simplex $\Delta_2 = \{ y \in \R^3_\ge \mid \ones_3 \cdot y = 2 \}$
and the set $\tilde \Delta_3 = \{ {y \in \R^4_\ge} \mid y_2 = 1 \}$.

Exponent data are
\begin{gather*}
B = \begin{pmatrix} B^e & B^i \end{pmatrix},
\quad
\mBp = \begin{pmatrix} B^e & B^i \\ \ones_3^\trans & 0 \\ 0 & \ones_4^\trans \end{pmatrix} 
= \begin{pmatrix} 
2 & 1 & 0 & 2 & 1 & 0 & 0 \\
1 & 0 & 1 & 0 & 1 & 0 & 0 \\
0 & 1 & 0 & 0 & 0 & 1 & 0 \\
0 & 0 & 1 & 0 & 0 & 0 & 1 \\
1 & 0 & 0 & 1 & 1 & 0 & 0 \\
1 & 1 & 1 & 0 & 0 & 0 & 0 \\ 
0 & 0 & 0 & 1 & 1 & 1 & 1
\end{pmatrix} , \quad \text{and} \\
D = \ker \mBp = \im \mH
\quad\text{with}\quad
\mH = 
\begin{pmatrix} 1 & -1 & 0 & 0 & -1 & 1 & 0 \\ 1 & 0 & -1 & -1 & 0 & 0 & 1 \end{pmatrix}^\trans .
\end{gather*}
In particular, $\dep = \dim D = 2$.

Finally, there are five variables, but no symbolic parameters.
Indeed, we have chosen constant ``parameters'',
\[
c = c^e \times c^i = \begin{pmatrix} 1 & 1 & 1 \end{pmatrix}^\trans \times \begin{pmatrix} 1 & 2 & 1 & 1 \end{pmatrix}^\trans
\]
such that $A^i \in \{-1,1\}^{1 \times 4}$,
and hence $C^i$ and $P^i$ have the simple parametrizations given above.

Now, we can apply our main result, Theorem~\ref{thm:main}.
Most importantly, we determine the solution set on the coefficient set,
that is, the set of all $y \in P$ such that  $y^{\mH} = c^{\mH}$.
Explicitly,
\begin{gather*}
\lambda_1^{1} \, 1^{-1} \, 1^{-1} \, \mu_2^{1} = 2^{-1} , \\
\lambda_1^{1} \, (1-\lambda_1)^{-1} \, \mu_1^{-1} \, \mu_3^{1} = 1 . 
\end{gather*}
That is, $\mu_2 = \frac{1}{2 \lambda_1}$ and $\mu_3 = \mu_1 \frac{1-\lambda_1}{\lambda_1}$,
and we choose $\lambda_1, \mu_1$ as independent parameters.
They have to fulfill the defining conditions of $P^e$ and $P^i$,
\[
0 < \lambda_1 < 1, \quad 
0 < \mu_1, \quad\text{and}\quad
\mu_1 + \frac{1}{2 \lambda_1} + \mu_1 \, \frac{1-\lambda_1}{\lambda_1} < 1 .
\]
Equivalently,
\[
0 < \mu_1
\quad\text{and}\quad
\frac{1}{2} + \mu_1 < \lambda_1 < 1 ,
\]
and the solution set on the coefficient set has an explicit parametrization,
\[
Y_c = \{ (\lambda_1,1,1-\lambda_1,\mu_1,1,\textstyle \frac{1}{2 \lambda_1},\textstyle \mu_1 \frac{1-\lambda_1}{\lambda_1})^\trans \mid 0 < \mu_1 \text{ and } \frac{1}{2} + \mu_1 < \lambda_1 < 1 \} ,
\]
involving a system of {\em linear} inequalities.

By Theorem~\ref{thm:main}, we obtain an explicit parametrization of
the solution set in the original variables $\xi = {x \choose k}$,
\[
Z_c = \{ (y \circ c^{-1})^E \mid y \in Y_c \} \circ \e^{\LL^\perp} ,
\]
via exponentiation.

For completeness,
we provide the matrix $E = \IL \, M^*$
(via the incidence matrix $\IL$, $M = B \, \IL$, and a generalized inverse $M^*$)
and the linear subspace $\LL^\perp = \ker M^\trans$,
\[
\IL =
\begin{pmatrix}
1 & 0 & 0 & 0 & 0 \\
0 & 1 & 0 & 0 & 0 \\
-1 & -1 & 0 & 0 & 0 \\
0 & 0 & 1 & 0 & 0 \\
0 & 0 & 0 & 1 & 0 \\
0 & 0 & 0 & 0 & 1 \\
0 & 0 & -1 & -1 & -1
\end{pmatrix} ,
\quad
M = B \, \IL = 
\begin{pmatrix}
2 & 1 & 2 & 1 & 0 \\
0 & -1 & 0 & 1 & 0 \\
0 & 1 & 0 & 0 & 1 \\
-1 & -1 & -1 & -1 & -1 \\
1 & 0 & 1 & 1 & 0
\end{pmatrix} , 
\]
\[
M^* = 
\begin{pmatrix}
1/2 & -1/2 & 0 & 0 & 0 \\
0 & 0 & 0 & 0 & 0 \\
0 & 0 & 0 & 0 & 0 \\
0 & 1 & 0 & 0 & 0 \\
0 & 0 & 1 & 0 & 0
\end{pmatrix} ,
\quad
E = \IL \, M^* = 
\begin{pmatrix}
1/2 & -1/2 & 0 & 0 & 0 \\
0 & 0 & 0 & 0 & 0 \\
-1/2 & 1/2 & 0 & 0 & 0 \\
0 & 0 & 0 & 0 & 0 \\
0 & 1 & 0 & 0 & 0 \\
0 & 0 & 1 & 0 & 0 \\
0 & -1 & -1 & 0 & 0
\end{pmatrix} , 
\]
and 
\[
\LL^\perp = \im
\begin{pmatrix}
1 & 0 \\
1 & 0 \\
0 & 1 \\
0 & 1 \\
-2 & 1 
\end{pmatrix} .
\]

Hence, 
\[
(y \circ c^{-1})^E 
=
\begin{pmatrix} 
\lambda_1^{1/2} (1-\lambda_1)^{-1/2} \\ 
\lambda_1^{-1/2} (1-\lambda_1)^{1/2} (1/2)^1 (\mu_1 \frac{1-\lambda_1}{\lambda_1})^{-1} \\
(\frac{1}{2\lambda_1})^1 (\mu_1 \frac{1-\lambda_1}{\lambda_1})^{-1} \\
1 \\
1 
\end{pmatrix}
=
\begin{pmatrix} 
(\frac{\lambda_1}{1-\lambda_1})^{1/2} \\ 
(\frac{\lambda_1}{1-\lambda_1})^{1/2} \frac{1}{2\mu_1} \\ 
\frac{1}{1-\lambda_1} \frac{1}{2\mu_1}\\
1 \\
1
\end{pmatrix}
\]
for $y \in Y_c$,
\[
\e^{\LL^\perp}
=
\exp \, ( \, \im
\begin{pmatrix}
1 & 0 \\
1 & 0 \\
0 & 1 \\
0 & 1 \\
-2 & 1 
\end{pmatrix} 
) 
=
\{
\begin{pmatrix}
\tau_1 \\
\tau_1 \\
\tau_2 \\
\tau_2 \\
\frac{\tau_2}{\tau_1^2} 
\end{pmatrix} 
\mid \tau_i > 0
\} ,
\]
and
\[
Z_c = \{ 
\begin{pmatrix} 
(\frac{\lambda_1}{1-\lambda_1})^{1/2} \\ 
(\frac{\lambda_1}{1-\lambda_1})^{1/2} \frac{1}{2\mu_1} \\ 
\frac{1}{1-\lambda_1} \frac{1}{2\mu_1} \\
1 \\
1
\end{pmatrix} 
\circ
\begin{pmatrix}
\tau_1 \\
\tau_1 \\
\tau_2 \\
\tau_2 \\
\frac{\tau_2}{\tau_1^2} 
\end{pmatrix} 
\mid
0 < \mu_1, \, 
\frac{1}{2} + \mu_1 < \lambda_1 < 1, \text{ and }
\tau_i > 0
\} .
\]

The solution set $Z_c$ in the original variables $\xi = (x,k)^\trans$ with $x=(x_1,x_2)^\trans$ and $k=(k_1,k_2,k_3)^\trans$
can be projected to the variables $(k,\bar x)$, where $\bar x = x_1+x_2$.

In fact, we fix $k_2=k_3=1$ and project $Z_c$ onto $k_1$ and $\bar x$.
(The projection is the area between the blue lines.)
\begin{center}
\includegraphics[width=0.3\textwidth]{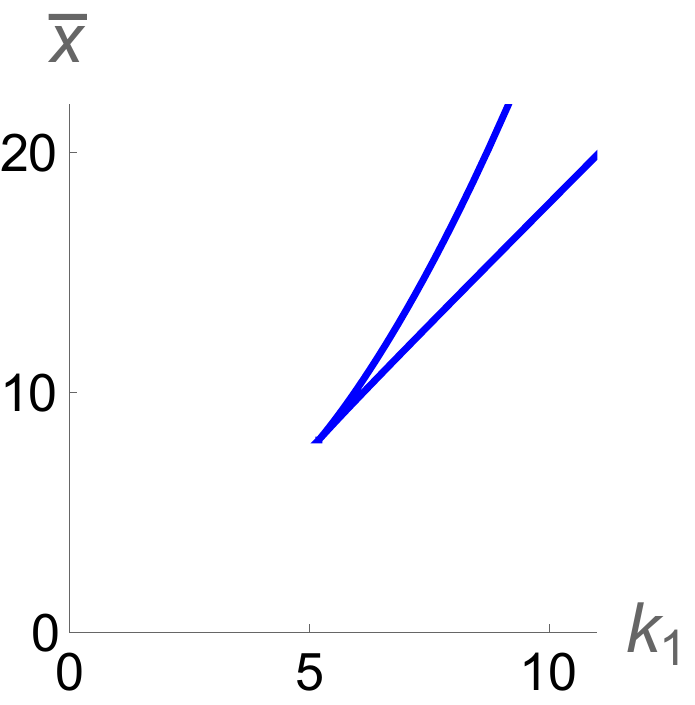}
\end{center}

\begin{rem}
In terms of the underlying reaction network,
the rate constants~$k$ and the total amount $\bar x$ determine the concentrations $x$
(via the equations for steady state and mass conservation).
If, in addition to the steady state equation, 
a certain strict inequality is fulfilled, then the parameter pair $(k,\bar x)$ enables multistationarity,
that is, more than one solution $x$.
(See the footnote at the beginning of the example.)

Above, we have determined the pairs $(x,k)$, with $k$ treated as a variable, that fulfill the equation/inequality, 
that is, the set $Z_c$, which can then be projected onto $(k,\bar x)$.
In fact, we have first determined an {\em explicit} parametrization of $Y_c$,
the solution set on the coefficient set, and then obtained $Z_c$ via exponentiation.
Since $Y_c$ is connected, the same holds for $Z_c$ and, via projection, for the region for multistationarity (given by the strict inequality).

The connectivity of the full region of multistationarity (including ``boundary cases'')
has been established by other methods in~\cite{FeliuTelek2023}, 
notably without determining an explicit parametrization. 
\end{rem}
\end{exa}

\subsection*{Acknowledgements}

We thank Elisenda Feliu and M\'at\'e Telek for comments on Example~\ref{exa:multi}.

This research was funded in whole, or in part, by the Austrian Science Fund (FWF), 
grant DOIs 10.55776/P33218 and 10.55776/PAT3748324 to SM and grant DOI 10.55776/P32301 to GR.

\subsection*{Conflict of interest and data availability}

On behalf of all authors, the corresponding author states 
that there is no conflict of interest
and that the manuscript has no associated data.



\bibliographystyle{abbrv} 
\bibliography{MR,linearstability,polynomials,crnt}


\clearpage

\appendix 
\section*{Appendix}


\section{Sign-characteristic functions} \label{app:sc}

As a key technique for the analysis of trinomials,
we introduce a family of {\em sign-characteristic} functions
on the open unit interval.
For $\alpha,\beta \in \R$, let
\begin{align*}
s_{\alpha,\beta} \colon & (0,1) \to \R_>, \\ 
& \lambda \mapsto \lambda^\alpha (1-\lambda)^\beta .
\end{align*}
For $\alpha,\beta\neq0$,
\[
s'_{\alpha,\beta}(\lambda) = s_{\alpha-1,\beta-1}(\lambda) \left( \alpha(1-\lambda)-\beta \lambda \right) .
\]
Hence,
$s_{\alpha,\beta}$ has an extremum at
\[
\lambda^*= \frac{\alpha}{\alpha+\beta} \in (0,1)
\]
if and only if $\alpha \cdot \beta>0$.
Then, 
\[
s_{\alpha,\beta}(\lambda^*) = \left( \frac{\alpha}{\alpha+\beta} \right)^\alpha \left( \frac{\beta}{\alpha+\beta} \right)^\beta .
\]
We call the functions sign-characteristic since the signs ($-,0$ or $+$) of $\alpha$ and $\beta$
characterize the values ($0,1$ or $\infty$) at $0+$ and $1-$, respectively.
In particular,
\begin{itemize}
\item
if $\alpha,\beta>0$, then $s_{\alpha,\beta}(0+)=s_{\alpha,\beta}(1-)=0$, and $s_{\alpha,\beta}$ has a maximum at~$\lambda^*$,
\item
if $\alpha,\beta<0$, then $s_{\alpha,\beta}(0+)=s_{\alpha,\beta}(1-)=\infty$, and $s_{\alpha,\beta}$ has a minimum at~$\lambda^*$,
\item
if $\alpha>0>\beta$, then $s_{\alpha,\beta}(0+)=0, \, s_{\alpha,\beta}(1-)=\infty$, and $s_{\alpha,\beta}$ is strictly monotonically increasing, and
\item
if $\alpha<0<\beta$, then $s_{\alpha,\beta}(0+)=\infty, \, s_{\alpha,\beta}(1-)=0$, and $s_{\alpha,\beta}$ is strictly monotonically decreasing.
\end{itemize}

\subsection*{Roots of sign-characteristic functions}

On the monotonic parts of the sign-characteristic functions,
we introduce inverses or ``roots''.
In particular,
\begin{itemize}
\item
if $\alpha \cdot \beta<0$, then we define
\begin{align*}
r_{\alpha,\beta} \colon & \R_> \to (0,1) , \\
& \lambda \mapsto (s_{\alpha,\beta})^{-1}(\lambda) .
\end{align*}
\item
If $\alpha \cdot \beta>0$, then we denote the restrictions of $s_{\alpha,\beta}$ to $(0,\lambda^*]$ and $[\lambda^*,1)$
by $s_{\alpha,\beta}^-$ and $s_{\alpha,\beta}^+$, respectively, and we define
\begin{alignat*}{4}
r_{\alpha,\beta}^- \colon & (0,s_{\alpha,\beta}(\lambda^*)] \to (0,\lambda^*] , \quad\text{and}\quad && r_{\alpha,\beta}^+ \colon && (0,s_{\alpha,\beta}(\lambda^*)] \to [\lambda^*,1) , \\
& \lambda \mapsto (s_{\alpha,\beta}^-)^{-1}(\lambda) , && && \lambda \mapsto (s_{\alpha,\beta}^+)^{-1}(\lambda) .
\end{alignat*}
\end{itemize}
The cases $\alpha=0$ or $\beta=0$ can be treated analogously.

\end{document}